\newtheorem{thm}{Theorem}[section]
\newtheorem{cor}[thm]{Corollary}
\newtheorem{lem}[thm]{Lemma}
\newtheorem{prop}[thm]{Proposition}
\theoremstyle{remark}
\newtheorem*{rem}{Remark}
\newcounter{remarkscounter}
\newenvironment{remarks}
{\medskip\noindent{\it
Remarks.}\begin{list}{{\rm(\arabic{remarkscounter})}
}{\usecounter{remarkscounter}

\setlength{\labelsep}{\fill} \setlength{\leftmargin}{0pt}
\setlength{\itemindent}{\fill}
\setlength{\labelwidth}{\fill}\setlength{\topsep}{0pt}
\setlength{\listparindent}{0pt}}} {\end{list}}
\numberwithin{equation}{section}
\newcommand{\A}{\mathbb{A}}
\newcommand{\GL}{\mathrm{GL}}
\newcommand{\SL}{\mathrm{SL}}
\newcommand{\ZZ}{\mathbb{Z}}
\newcommand{\QQ}{\mathbb{Q}}
\newcommand{\lto}{\longrightarrow}
\newcommand{\OO}{\mathcal{O}}
\newcommand{\CC}{\mathbb{C}}
\newcommand{\RR}{\mathbb{R}}
\newcommand{\gl}{\mathfrak{gl}}
\newcommand{\quash}[1]{}
\theoremstyle{definition}
\renewcommand{\bar}{\overline}
\numberwithin{equation}{subsection}
\newcommand{\one}{\mathbbm{1}}
\begin{document}
\title[Invariant automorphic kernels]{Invariant four-variable automorphic kernel functions}
\author{Jayce R. Getz}
\address{Department of Mathematics\\
Duke University\\
Durham, NC 27708}
\email{jgetz@math.duke.edu}

\subjclass[2010]{Primary 11F70;  Secondary 11F72, 11D85}

\thanks{The author is thankful for partial support provided by NSF grant DMS-1405708.  Any opinions, findings, and conclusions or recommendations expressed in this material are those of the author and do not necessarily reflect the views of the National Science Foundation.}

\maketitle

\begin{abstract}

Let $F$ be a number field, let $\mathbb{A}_F$ be its ring of adeles, and let $g_{\ell 1},g_{\ell 2},g_{r1}, g_{r2} \in \mathrm{GL}_2(\mathbb{A}_F)$.  
Previously the author provided an absolutely convergent geometric expression for
the four variable kernel function
\begin{align*}
\sum_{\pi} K_{\pi}(g_{\ell 1},g_{r1})K_{\pi^{\vee}}(g_{\ell 2},g_{r2})\mathrm{Res}_{s=1}L(s,(\pi \times \pi^{\vee})^S),
\end{align*}
where the sum is over isomorphism classes of cuspidal automorphic representations $\pi$ of $\mathrm{GL}_2(\mathbb{A}_F)$.  Here 
$K_{\pi}$ is the typical kernel function representing the action of a test function on the space of the cuspidal automorphic representation $\pi$.  In this paper we 
show how to use ideas from the circle method to provide an alternate expansion for the four-variable kernel function that is visibly invariant under the natural action of $\GL_2(F) \times \GL_2(F)$.
\end{abstract}

\tableofcontents

\section{Introduction}

Let $F$ be a number field
 and let $A \leq \GL_2(F_\infty)$ be the central diagonal copy of $\RR_{>0}$.  For $f \in C_c^\infty(A \backslash \GL_2(\A_F))$ and cuspidal automorphic representations $\pi$ of $A \backslash \GL_2(\A_F)$ let
$$
K_{\pi(f)}(x,y)
$$
denote the usual kernel function (for more details on our notational conventions see the introduction to \cite{GetzKer1}).  

Let
$g_\ell=(g_{\ell 1},g_{\ell 2}),g_r=(g_{r1},g_{r2})\in \GL_2(\A_F) \times \GL_2(\A_F)$
and let $f_1,f_2 \in C_c^\infty(A \backslash \GL_2(\A_F))$ be test functions unramified outside of a finite set of places $S$. Let
$$
\Sigma_{\mathrm{cusp}}(g_\ell,g_r):=\sum_{\pi} K_{\pi(f_1)}(g_{\ell 1},g_{r 1})K_{\pi^{\vee}(f_2)}(g_{\ell 2},g_{r2})\mathrm{Res}_{s=1}L(s,(\pi \times \pi^{\vee})^S).
$$
In \cite{GetzKer1} the author gave a geometric expression
for $\Sigma_{\mathrm{cusp}}(g_\ell,g_r)$.  The motivation, as explained in loc.~cit., is to integrate this expression over a pair of twisted diagonal subgroups and thereby provide an explicit nonabelian trace formula, that is, a trace formula whose spectral side
is a weighted sum over representations invariant under a simple nonabelian subgroup of $\mathrm{Aut}_F(\QQ)$.  This is a step in the author's program to establish nonsolvable base change for $\GL_2$ (see \cite{GetzApp}, \cite{GH}, \cite{GetzKer1}).  Other possible applications are given in \S \ref{ssec-ap} below.

The defect in the formula for $\Sigma_{\mathrm{cusp}}(g_\ell,g_r)$ given in \cite{GetzKer1} is that it is not obviously invariant under $(g_\ell,g_r) \longmapsto (\gamma_\ell g_\ell,\gamma_rg_r)$ for $\gamma_\ell,\gamma_r \in \GL_2(F)^{\times 2}$; briefly, it is not invariant under $\GL_2(F)^{\times 2}$.  Thus to use it for its intended purpose one seems to be forced to employ some variant of the Rankin-Selberg method.  

 The root of the lack of invariance in the formula for $\Sigma_{\mathrm{cusp}}(g_\ell,g_r)$ in \cite{GetzKer1} is easy to describe.  In loc.~cit.~ one investigates  
a certain limit constructed out of Whittaker coefficients of a product of kernel functions.  Taking the Whittaker coefficients introduces integrals over adelic quotients of nilpotent groups and destroys invariance, and it is not so easy to rebuild this invariance on the geometric side of the formula.

In this paper we overcome this difficulty by providing a different geometric formula for $\Sigma_{\mathrm{cusp}}(g_\ell,g_r)$ 
that is clearly invariant under $\GL_2(F)^{\times 2}$.  This will make it easier to integrate the formula over a pair of twisted diagonals.  It should also be noted that the approach exposed in this paper should work if we replace $\GL_2$ by an inner form, whereas the approach of \cite{GetzKer1} can't be applied to these groups because it involves integration over nilpotent subgroups.  

Moreover, the approach given here is of interest in itself for at least two reasons.  First, it is an instance where one can insert the nonstandard test functions of Ng\^o \cite{Ngo} and Sakellaridis  \cite{Sak} into the trace formula and understand the coarse analytic properties of the result without appealing to known results on automorphic forms.  Second, it involves a variant of the circle method in a case where one is not interested in the main term, but in the secondary terms (this is discussed in \S \ref{ssec-the-method} below).  Finally, we remark that at this stage in the mathematical community's investigation of Langlands functoriality beyond endoscopy, it is vital to develop as many tools and methods as possible in order to broaden our collective understanding.   

\begin{rem} The approach of \cite{GetzKer1} is not without merits.  It is a little simpler than the approach exposed here in some respects, and it is unclear which method will generalize easiest to the higher rank case.  
\end{rem}

\subsection{Statement of the formula} \label{ssec-formula}
Let $S$ be a finite set of places of $F$ including the infinite and dyadic places.
We assume that $\OO_F^S$ has class number $1$ and $\OO_F/\ZZ$ is unramified outside of $S$.  Let $v \in S-\infty$.  
Let $k \in \ZZ_{ > 0}$.  Consider the following assumption on $\Phi_v \in C_c^\infty(\GL_2(F_v))$:
\begin{itemize}
\item[(A)] The function $\Phi_v$ is supported in the set of $g$ with valuation $v(\det g)=k$,\\ $\int_{\GL_2(F_v)}\Phi_v(g)dg=0$, and $\Phi_v \in C_c^\infty(\GL_2(F_v)//\GL_2(\OO_{F_v}))$.
\end{itemize}
We introduce our test functions and assumptions, and then comment on them after the statement of our main theorem:
\begin{enumerate}
\item[(i)]
Let $f_1,f_2 \in C_c^\infty(A \backslash \GL_2(F_S))$ and define $f(g_1,g_2)=f_1(g_1)f_2(g_2)$ for $(g_1,g_2) \in \GL_2(\A_F)^{\times 2}$.\smallskip
\item[(ii)] Assume that $f_i=f_i^v \otimes f_{iv}$ with $f^v_1,f^v_2 \in C_c^\infty(A \backslash \GL_2(F_{S-v}))$, $f_{1v} \in C_c^\infty(\GL_2(F_v))$, $f_{2v}=\one_{\GL_2(\OO_{F_v})}$, and $f_{1v}$ satisfying assumption (A) above. 
\smallskip
\item[(iii)] Assume that the operators 
$$
R(f_1),R(f_2):L^2(A \GL_2(F) \backslash \GL_2(\A_F))\lto L^2(A \GL_2(F) \backslash \GL_2(\A_F))
$$ induced by the right regular action and the test functions $f_1$ and $f_2$ respectively have cuspidal image.
\smallskip
\item[(iv)] 
Let $V_1 \in C_c^\infty((0,\infty))$ and
 define $V :F_S^{\oplus 2} \to \RR_{ \geq 0}$ as in \S \ref{sec-delta-app} using it.
 \smallskip
\item[(v)] Let $h(x,y):=W(x)-W(y/x)$ where $W=W_\infty\one_{\OO_{FS}^\times} \in C_c^\infty(F_S^\times)$ and $\widehat{W}(0)=1$.  
\smallskip
\item[(vi)] Let $\one_{\mathcal{F}S}$ be the characteristic function of a fundamental domain for $\OO_F^{S \times}$ acting on $F_S^\times$ and for $b=(b_1,b_2) \in F_S^\times \times F_S^\times$ define 
$$
\one_\mathcal{F}(b):=\one_{\mathcal{F}S}(b_1)\one_{\widehat{\OO}_F^{S \times}}(b_1)\one_{\widehat{\OO}_F^{S \times}}(b_2).
$$

\end{enumerate}

We also abbreviate 
\begin{align*}
b\det T:&=(b_1\det T_1,b_2\det T_2)\\
P(b,T):&=b_1\det T_1-b_2 \det T_2\\
\mathrm{tr}\,\gamma T:&=\mathrm{tr}(\gamma_1T_1+\gamma_2T_2)
\end{align*} 
for $b=(b_1,b_2) \in (\A_F^{\times})^{\oplus 2}$, $\gamma=(\gamma_1,\gamma_2)$, and $T=(T_1,T_2) \in \gl_2(\A_F)$.

Finally, define
\begin{align} \label{I-def}
\mathcal{I}(b,\gamma):=\one_{\mathcal{F}}(b)\int_{F_S}\Bigg(\int_{\gl_2(F_S)^{\oplus 2}}\frac{V(b\det T)}{|b_1\det T_1|_S} h\left(t,
P(b, T) \right)  f(T) \psi_S\left(\frac{\mathrm{tr}\,\gamma T}{t}\right)dT\Bigg)\frac{dt}{|t|_S^4}.
\end{align}

We show in Proposition \ref{prop-arch} and Corollary \ref{cor-h} below that $\mathcal{I}(b,\gamma)$ is Schwartz as a function of $\gamma \in \gl_2^{\oplus 2}(F_S)$.

With the notation above in mind, 
we state the main theorem:
\begin{thm} \label{main-thm} 
Letting $\widetilde{V}_1$ denote the Mellin transform of $V_1$, one has that $\Sigma_{\mathrm{cusp}}(g_\ell,g_r)$ is equal to 
\begin{align*} 
\frac{\zeta_E^S(2)|\det g_\ell g_r^{-1}|^2}{d_F^4\widetilde{V}_1(1)}
\sum_{0\neq \gamma\in \gl_2(F)^{\oplus 2}}&\sum_{\substack{b \in (F^\times)^{\oplus 2}\\ b_2\det \gamma_1=b_1\det \gamma_2}}
\sum_c |c|_S^2\mathcal{I}(b\det g_\ell g_r^{-1},cg_r^{-1}\gamma g_\ell )\one_{\gl_2(\widehat{\OO}_{F}^S)}(g^{-1}_r \gamma g_\ell). \nonumber
\end{align*}
where $d_F \in \ZZ_{>0}$ is the absolute discriminant of $F$, the sum on $c$ is over a set of representatives for the nonzero principal ideals of $\OO_F^S$, and $|\det g|:=|\det g_1|| \det g_2|$. 
\end{thm}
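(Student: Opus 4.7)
The plan is to transform the right-hand side of the theorem, via Poisson summation in $\gamma$ together with the delta-symbol identity built out of $(V,h)$, into the (non-invariant) geometric formula for $\Sigma_{\mathrm{cusp}}(g_\ell,g_r)$ already obtained in \cite{GetzKer1}.

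First I would unpack $\mathcal I(b\det g_\ell g_r^{-1},\,cg_r^{-1}\gamma g_\ell)$. Making the change of variables $T\mapsto g_\ell Tg_r^{-1}$ inside the defining integral places the character factor in the form $\psi_S(c\,\mathrm{tr}\,\gamma T/t)$, paired via the trace with $\gamma$, and converts the indicator $\one_{\gl_2(\widehat{\OO}_F^S)}(g_r^{-1}\gamma g_\ell)$ into the condition that $\gamma$ lies in a specific $\OO_F^S$-lattice inside $\gl_2(F)^{\oplus 2}$. Applying Poisson summation in $\gamma$ against $\psi_S(c\,\mathrm{tr}\,\gamma T/t)$ then removes the character and replaces the sum over $\gamma$ by the evaluation of $f(g_\ell Tg_r^{-1})$ on the corresponding dual $F$-rational lattice of $T$'s; the discriminant factor $d_F^{-4}$ in the statement comes out of this dualization.

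Second, I would combine the sum over nonzero principal ideals $c$ of $\OO_F^S$ with the $t$-integral $\int_{F_S} h(t,P(b,T))\,dt/|t|_S^4$. After an appropriate rescaling in $t$ and using $h(x,y)=W(x)-W(y/x)$ with $\widehat W(0)=1$, this double sum/integral realises an adelic analogue of the Duke--Friedlander--Iwaniec delta symbol, detecting the condition $P(b,T)=0$, i.e.\ $b_2\det T_1=b_1\det T_2$. The remaining integral over $b\in\mathcal F$ against the weight $V(b\det T)/|b_1\det T_1|_S$ then supplies, via Mellin inversion and the normalization $\widetilde V_1(1)$ together with an Euler product outside $S$ producing $\zeta_E^S(2)$, the residue factor $\mathrm{Res}_{s=1}L(s,(\pi\times\pi^\vee)^S)$ in the definition of $\Sigma_{\mathrm{cusp}}$.

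Third, the resulting geometric sum of $f(g_\ell Tg_r^{-1})$ over $T\in\gl_2(F)^{\oplus 2}$ constrained by $\det T_1/\det T_2\in(F^\times)^2$ is exactly the geometric expansion of $\Sigma_{\mathrm{cusp}}(g_\ell,g_r)$ derived in \cite{GetzKer1}. The cuspidality hypothesis (iii) together with the vanishing assumption (A) on $f_{1v}$ guarantees that only the cuspidal part of $R(f_1)\otimes R(f_2)$ contributes and that no Eisenstein or residual terms intrude. The main obstacle is absolute convergence of all the rearrangements: the Schwartz estimates for $\mathcal I(b,\gamma)$ provided by Proposition \ref{prop-arch} and Corollary \ref{cor-h} justify the Poisson step, but one also needs uniform bounds in $(b,T)$ to interchange the $c$-sum with the $t$- and $T$-integrals and to rigorously extract the delta symbol. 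The vanishing $\int f_{1v}(g)\,dg=0$ built into assumption (A) is essential here: it kills the ``main term'' of the delta-symbol identity that would otherwise appear as a divergent $\gamma=0$ contribution, thus making the resulting geometric identity genuinely absolutely convergent.
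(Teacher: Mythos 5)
Your proposal runs the argument \emph{backward} (starting from the invariant right-hand side and aiming to recover a geometric formula for $\Sigma_{\mathrm{cusp}}$), whereas the paper goes forward: Proposition~\ref{prop-spec} converts $\Sigma_{\mathrm{cusp}}$ into the truncated geometric sum $\lim_{X\to\infty}\Sigma(X)$, then the $\delta$-symbol of \S\ref{sec-delta} is inserted, Poisson summation in $\gamma\in\gl_2(F)^{\oplus 2}$ is applied, the $\gamma=(0,0)$ term is shown to vanish (Lemma~\ref{lem-vanish}), and finally Poisson summation in the modulus $d\in F^\times$ followed by a Mellin contour shift produces Theorem~\ref{thm-main-comp}. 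While a reverse derivation is conceivable in principle, your sketch skips the step that is actually the heart of the proof: the application of Poisson summation in the additive variable $d$ of the delta symbol, the resulting sum over Hecke characters $\chi$, and the contour shift from $\mathrm{Re}(s)=\sigma>1$ to $\mathrm{Re}(s)=-\tfrac72$ whose residue at $s=-3$ is precisely what creates the $c$-sum with weight $|c|_S^2$ and the $\zeta_F^S(2)$ factor (see \eqref{before-move}--\eqref{after-move-res}). Saying that ``combining the $c$-sum with the $t$-integral realises an adelic DFI $\delta$-symbol'' does not reproduce this structure, because in the paper the $c$-sum arises from Proposition~\ref{prop-na-comp} and the residue extraction, not as a component of the $\delta$-symbol itself, and the error term \eqref{after-move-nres} must be shown to be $O(X^{-1/4})$ via the preconvex bounds on $L(s,\chi^S)$ --- a nontrivial analytic input your proposal does not address.

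A second, more conceptual gap: you claim the $V$-weight plus Mellin inversion ``supplies the residue factor $\mathrm{Res}_{s=1}L(s,(\pi\times\pi^\vee)^S)$.'' That factor lives purely on the spectral side; it arises from the comparison in Proposition~\ref{prop-spec} (which uses the cuspidality assumption (iii) and was proved in \cite{GetzKer1}), not from any geometric manipulation. Relatedly, your claimed endpoint --- that the backward manipulation lands on ``the geometric expansion of $\Sigma_{\mathrm{cusp}}$ derived in \cite{GetzKer1}'' --- misidentifies the target: the non-invariant formula in \cite{GetzKer1} involves Whittaker coefficients and nilpotent integrals, not a sum of $f(g_\ell^{-1}Tg_r)$ over lattice points. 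What the backward direction should land on is $\lim_{X\to\infty}\Sigma(X)$, at which point one invokes Proposition~\ref{prop-spec}; but to do that you must first explain how to reintroduce the truncation parameter $X$ and the $c_{\sqrt X}$ normalization into the $X$-independent right-hand side, which your sketch does not attempt. Your description of the role of assumption~(A) in killing the $\gamma=(0,0)$ term is essentially correct, though in the paper the zeroth Poisson term is shown to vanish \emph{identically} (not merely to regularize a divergence).
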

Here $d_F \in \ZZ_{>0}$ is the absolute discriminant of $F$.  
We complete the proof of Theorem \ref{main-thm} in \S \ref{sec-ps-in-d} below.  We now comment on the various assumptions:

\begin{remarks}
\item Assumption (A) eliminates the contribution of the nongeneric spectrum; it is no loss of generality for studying the generic spectrum, as we prove in Lemma \ref{lem-no-loss}.

\item We assume (iii) only to simplify the spectral side of the formula; it is not used in the analysis of the geometric side which forms the bulk of this paper.  

\item The $V$ function smooths sums over Hecke operators. 
\item The $h$ function comes up in our application of the $\delta$-symbol method (see \S \ref{sec-delta} and \S \ref{sec-delta-app}).
\end{remarks}

\subsection{Possible applications} \label{ssec-ap}

Our primary motivation for proving Theorem \ref{main-thm} is to use it to produce a trace formula isolating representations invariant under a pair of automorphisms $\iota,\tau \in \mathrm{Aut}_{\QQ}(F)$.  One might then hope to compare this formula with a similar formula over the fixed field of $\langle \iota,\tau \rangle$ acting on $F$ and prove nonsolvable base change for $\GL_2$ (compare \cite{GetzApp}, \cite{GH}, \cite{GetzKer1}).  Of course, this is very speculative.  

Let $\chi_1,\chi_2,\chi_3,\chi_4:F^\times \backslash \A_F^\times \to \CC^\times$ be a quadruple of characters.  A more immediate application of Theorem \ref{main-thm} might be studying asymptotics of sums of products of $L$-functions of the form
$$
L(\tfrac{1}{2},\pi \otimes \chi_1)L(\tfrac{1}{2},\pi \otimes \chi_2)\bar{L(\tfrac{1}{2},\pi \otimes \chi_3)}\bar{L(\tfrac{1}{2},\pi \otimes \chi_4)}
$$
as the analytic conductor of $\pi$ increases.  W.~Zhang has also pointed out to the author the possibility of using the main theorem to prove a new Waldspurger type formula (compare \cite[\S 4.2]{WZ}) involving products of $L$-functions as above.  In any case, we would like to emphasize that Theorem \ref{main-thm} is flexible enough to lead to a variety of applications beyond the primary one motivating the author.

\subsection{The method} \label{ssec-the-method}

Let $f \in C_c^\infty((A \backslash \GL_2(\A_F))^{\times 2})$ and let
$$
\one_{m}:=\one_{g \in \gl_2(\widehat{\OO}_F^S) \cap \GL_2(F):\,\det g \widehat{\OO}_F^{S\times}=m\widehat{\OO}_F^{S\times}}
$$
and for $g=(g_1,g_2) \in \GL_2(\A_F^S)^{\times 2}$ let $\one_{m}(g):=\one_m(g_1)\one_m(g_2)$.
We consider
\begin{align*}
\Sigma(X)=\sum_{m} \frac{V_1(|m|_S/X)}{|m|_SX}
\sum_{\gamma \in \GL_2(F)^{\times 2}}f\one_m(g_\ell^{-1} \gamma g_r)
\end{align*}
where the sum on $m$ is over a set of representatives for the nonzero (principal) ideals of $\OO_F^S$. 

The following proposition is proven using an easy modification of the proof of \cite[Proposition 5.1] {GetzKer1}:
\begin{prop} \label{prop-spec}
If $f=f_1f_2$ and $R(f_1)$, $R(f_2)$ have cuspidal image, then 
$$
\widetilde{V}_1(1)\Sigma_{\mathrm{cusp}}(g_\ell,g_r)=\zeta_F^S(2)\lim_{X \to \infty}\Sigma(X).
$$ \qed
\end{prop}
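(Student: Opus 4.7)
The plan is to follow the proof of \cite[Proposition 5.1]{GetzKer1} essentially verbatim, with routine modifications to accommodate the variables $(g_\ell, g_r)$. Two ingredients are needed: a spectral unfolding of the inner $\gamma$-sum as a finite double sum over cuspidal representations weighted by Hecke eigenvalues, and a Mellin contour-shift that extracts the Rankin-Selberg residue from the outer $m$-average.

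For the unfolding, each factor $\sum_{\gamma_i \in \GL_2(F)}(f_i\one_m)(g_{\ell i}^{-1}\gamma_i g_{r i})$ is the automorphic kernel of the operator $R(f_i\one_m)$. By assumption (iii) this kernel is supported on the cuspidal spectrum, and since $\one_m$ is bi-invariant under $\GL_2(\widehat{\OO}_F^S)$ it acts on each cuspidal $\pi$-isotypic component by a scalar $\lambda_\pi(m)$, the corresponding Hecke eigenvalue in the normalization inherited from $R(\one_m)$. Hence each factor decomposes as $\sum_\pi \lambda_\pi(m)K_{\pi(f_i)}(g_{\ell i}, g_{r i})$; multiplying the two and swapping the resulting finite double sum with the $m$-average, the claim reduces to evaluating
\[
\lim_{X\to\infty}\sum_m \frac{V_1(|m|_S/X)}{|m|_S X}\,\lambda_{\pi_1}(m)\lambda_{\pi_2}(m)
\]
for each pair $(\pi_1,\pi_2)$ of cuspidal representations.

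For the Mellin extraction, set $D_{\pi_1,\pi_2}(s):=\sum_m \lambda_{\pi_1}(m)\lambda_{\pi_2}(m)|m|_S^{-s}$. The Rankin-Selberg identity for $\GL_2 \times \GL_2$ (with our normalization) identifies $D_{\pi_1,\pi_2}(s+1)$ with $L^S(s, \pi_1 \times \pi_2)/\zeta_F^S(2s)$ up to Euler factors at $S$. This is entire unless $\pi_2 = \pi_1^\vee$, in which case it has a simple pole at $s=1$ of residue $\mathrm{Res}_{s=1}L^S(s, \pi_1 \times \pi_1^\vee)/\zeta_F^S(2)$. Mellin inversion with $c > 1$ gives
\[
\sum_m \frac{V_1(|m|_S/X)}{|m|_S X}\,\lambda_{\pi_1}(m)\lambda_{\pi_2}(m) = \frac{1}{2\pi i}\int_{(c)} \widetilde V_1(s)\,X^{s-1}\,D_{\pi_1,\pi_2}(s+1)\,ds,
\]
and shifting the contour past $s = 1$ to some $c' \in (0, 1)$ yields the residue $\widetilde V_1(1)\mathrm{Res}_{s=1}L^S(s, \pi_1 \times \pi_1^\vee)/\zeta_F^S(2)$ when $\pi_2 = \pi_1^\vee$, and nothing otherwise; the remaining integral is $O(X^{c'-1}) \to 0$ by the rapid decay of $\widetilde V_1$ on vertical lines (since $V_1 \in C_c^\infty((0,\infty))$) together with standard convexity bounds on the $L$-function. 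Only the diagonal $\pi_2 = \pi_1^\vee$ survives in the limit, and multiplying by $\zeta_F^S(2)$ produces the proposition.

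The main point requiring care is the spectral unfolding: one must verify that $R(\one_m)$ acts by the scalar $\lambda_\pi(m)$ on each cuspidal $\pi$-isotypic component, so that the two factor kernels separate cleanly. This is the computation already performed in \cite[Proposition 5.1]{GetzKer1}; the presence of $(g_\ell, g_r)$ as additional arguments at which the kernels are evaluated passes through that argument without interference.
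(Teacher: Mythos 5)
Your proposal takes essentially the same route as the paper: the paper's entire ``proof'' is the sentence that Proposition~\ref{prop-spec} follows by an easy modification of \cite[Proposition 5.1]{GetzKer1}, and your sketch correctly reconstructs that argument --- spectrally decompose each factor of the $\gamma$-sum via hypothesis (iii), pull $\lambda_\pi(m)$ out of the action of $\one_m$ on spherical vectors, and extract the Rankin--Selberg residue at $s=1$ by Mellin inversion and a contour shift, with only $\pi_2=\pi_1^\vee$ surviving. The only nit is that the shifted contour should sit in $(\tfrac12,1)$ rather than arbitrary $(0,1)$ to stay clear of zeros of $\zeta_F^S(2s)$, but this is cosmetic.
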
 
\noindent This is the only place in the paper where we use the assumption that $R(f_1)$ and $R(f_2)$ have cuspidal image.  

The bulk of the paper is devoted to evaluating $\lim_{X \to \infty}\Sigma(X)$ geometrically.  
To see what is going on, it is perhaps useful to specialize to the case where $S=\infty$, $F=\QQ$, $g_\ell=g_r=(I,I)$, and where $f \in C_c^\infty((A \backslash \GL_2(\RR))^{\times 2})$ is supported on elements of $\GL_2(\RR)$ with positive determinant.  In this case the sum $\Sigma(X)$ reduces to 
$$
\sum_{\substack{\gamma_1,\gamma_2 \in \gl_2(\ZZ)\\
\det \gamma_1=\det \gamma_2}} \frac{V_1\left(\frac{\det \gamma_1}{X} \right)}{X\det \gamma_1} f(\gamma_1,\gamma_2).
$$
Thus $\Sigma(X)$ is essentially a smoothed version of the function counting integral points of height at most $X$ on the hypersurface in $\gl^{\oplus 2}_2 \cong \A^8$ defined by $\det\gamma_1-\det \gamma_2=0$.  However, we are not interested in the main term, which comes from the trivial representation\footnote{It would have size $X$ if we had not used assumption (ii) of Theorem \ref{main-thm} to remove it (compare Lemma \ref{lem-vanish}).}.  We are interested in all of the secondary terms.   Despite this, the version of the circle method known as the $\delta$-symbol method is still strong enough to give us what we need; a suitable modification of this method is what we use.

\begin{remarks}
\item The hypersurface in question is homogeneous, so in obtaining the main term of $\Sigma(X)$ one could use automorphic techniques as in the work of Duke, Rudnick and Sarnak \cite{DRS}.  However, this is of no use to us, for it would just give back the spectral formula for $\Sigma_{\mathrm{cusp}}(g_\ell,g_r)$.  

\item The only other instance that the author knows where secondary terms have been obtained via the circle method is in Vaughan and Wooley \cite{VW} and  Schindler \cite{Schindler}.
\end{remarks}

\subsection{Outline of the paper}

In \S \ref{sec-delta} we introduce our expansion of the $\delta$-symbol.  It is applied to $\Sigma(X)$ in \S \ref{sec-delta-app}.  We then apply Poisson summation in $\gamma \in \gl^{\oplus 2}_2(F)$ to the sum and then write $\Sigma(X)=\Sigma_0(X)+\Sigma^0(X)$ where $\Sigma_0(X)$ is the contribution of the $(0,0)$ term after Poisson summation and $\Sigma^0(X)$ is the contribution of the other terms (see \eqref{decomp}).  We isolate the zeroth term after Poisson summation in \S \ref{sec-vanish} and show that it is zero under assumption (ii) in the statement of Theorem \ref{main-thm}; this is the only place in the paper where this assumption is used.  

We are left with analyzing $\Sigma^0(X)$.  This requires one more application of Poisson summation (in the multiplicative sense).  The computations in the unramified case are contained in \S \ref{sec-comp}
and the
estimates required to handle the resulting sum are contained in \S \ref{sec-bounds}.  The actual application of Poisson summation in the multiplicative sense comes in \S \ref{sec-ps-in-d}, and this is where we complete the proof of Theorem \ref{main-thm}.

\subsection{Notation}
\label{ssec-notation}
Throughout this paper we use ``standard'' normalizations of Haar measures (see \cite[\S 2]{GH}).  Letting $\psi:F \backslash \A_F \to \CC^\times$ denote the ``standard'' additive character (see \cite[\S 3.1]{GH}), for $\Phi \in C_c^\infty(\gl_n(\A_F))$ we let
$$
\widehat{\Phi}(Y):=\int_{\gl_n(\A_F)}\Phi(X)\psi(\mathrm{tr}( YX)) dX
$$
denote the Fourier transform of $\Phi$.  The Poisson summation formula then takes the form
\begin{align*}
\sum_{\gamma \in \gl_n(F)}\Phi(\gamma)=\frac{1}{d_F^{n/2}}\sum_{\gamma \in \gl_n(F)}\widehat{\Phi}(\gamma)
\end{align*}
where $d_F \in \ZZ_{>0}$ is the absolute discriminant of $F$.

\section*{Acknowledgements}

The author thanks L.~Pierce, D.~Schindler and W.~Zhang for useful conversations
and H.~Hahn for her constant encouragement and help with editing.

\section{The $\delta$-symbol} \label{sec-delta}

For $m \in \OO_F^S$ let
\begin{align}
\delta^{S}(m):=\begin{cases}1 & \textrm{ if }m=0\\
0 & \textrm{ otherwise.} \end{cases}
\end{align}
Duke, Friedlander and Iwaniec \cite{DFI} introduced a very useful expression for this simple function in the case where $F=\QQ$ which has been used to great effect (see also the work of Heath-Brown \cite{HB}) and generalized to ideals of number fields in work of Browning and Vishe \cite{BV}.  We introduce a slight variant of their expression here.  It will be applied below in \S \ref{sec-delta-app}.

If $X \in \RR_{>0}$ we denote by
$$
\Delta(X) \in \A_F^{\times}
$$
the idele that is $X^{[F:\QQ]^{-1}}$ at all places $v|\infty$ and $1$ elsewhere.

In this section we prove the following proposition:
\begin{prop}  \label{prop-delta}
Let $W=\prod_{v\in S}W_v \in C_c^\infty(F_S^\times)$ be nonnegative and satisfy $\widehat{W}(0)=1$.  If $h(x,y):=W(x)-W(y/x)$ then 
 for all sufficiently large $Q \in \RR_{>0}$ one has
$$
\delta^S(m)=\frac{c_Q}{Q} \sum_{d \in \OO_F^S-0} \one_{d \widehat{\OO}_F^S}(m) h\left(\frac{d}{\Delta(Q)},\frac{m}{\Delta(Q)^{2}} \right)
$$
where for any $N>0$ one has
$$
c_Q=\sqrt{d}_F+O_N(Q^{-N}).
$$
\end{prop}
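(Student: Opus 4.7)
The plan is to split the analysis of the right-hand side according to whether $m = 0$ or $m \neq 0$, using the latter case to exhibit a canceling involution and the former to define $c_Q$ via adelic Poisson summation.

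Suppose first that $m \neq 0$. The indicator $\one_{d\widehat{\OO}_F^S}(m)$ restricts $d$ to the set of $S$-integer divisors $D(m) := \{d \in \OO_F^S \setminus 0 : m/d \in \OO_F^S\}$, on which $d \mapsto m/d$ is a bijective involution. Since $W$ is compactly supported in $F_S^\times$ and $\OO_F^S$ is discrete in $F_S$, only finitely many $d \in D(m)$ contribute, so both $\sum_{d \in D(m)} W(d/\Delta(Q))$ and $\sum_{d \in D(m)} W(m/(d\Delta(Q)))$ are finite sums; substituting $d \mapsto m/d$ in the first turns it into the second, so their difference, which is $\sum_{d \in D(m)} h(d/\Delta(Q), m/\Delta(Q)^2)$, vanishes identically, matching $\delta^S(m) = 0$ regardless of $Q$ or $c_Q$.

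Now suppose $m = 0$. The indicator is identically $1$ and $h(d/\Delta(Q), 0) = W(d/\Delta(Q))$ since $W$ extends by zero off $F_S^\times$. To evaluate $\Sigma_Q := \sum_{d \in \OO_F^S \setminus 0} W(d/\Delta(Q))$ I would apply adelic Poisson summation to $\Phi := W_Q \otimes \one_{\widehat{\OO}_F^S}$ on $\A_F$, where $W_Q(x) := W(x/\Delta(Q))$; the identification $F \cap \widehat{\OO}_F^S = \OO_F^S$ (together with $W(0) = 0$) recovers precisely $\Sigma_Q$. The unramified-outside-$S$ hypothesis makes $\widehat{\one_{\widehat{\OO}_F^S}} = \one_{\widehat{\OO}_F^S}$, and a change of variables gives $\widehat{W_Q}(\xi) = Q\,\widehat{W}(\Delta(Q)\xi)$ because $|\Delta(Q)|_S = Q$, so Poisson with the factor $d_F^{-1/2}$ yields
\[
\Sigma_Q = \frac{Q}{\sqrt{d_F}} \sum_{\xi \in \OO_F^S} \widehat{W}(\Delta(Q)\xi).
\]

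The $\xi = 0$ term contributes $Q/\sqrt{d_F}$ by $\widehat{W}(0) = 1$. For $\xi \neq 0$, the Schwartz--Bruhat nature of $\widehat{W}$ at each finite place of $S$ confines the finite components of $\xi$ to a bounded set, and the product formula applied to $\xi \in \OO_F^S \setminus 0$ then forces $|\xi|_\infty \geq c$ for some constant $c > 0$ independent of $Q$. Since $\widehat{W}_\infty$ is Schwartz and $|\Delta(Q)\xi|_\infty = Q|\xi|_\infty$, the archimedean factor decays faster than any inverse power of $Q$, and polynomial growth of $\#\{\xi \in \OO_F^S : |\xi|_\infty \leq T\}$ makes the total nonzero contribution $O_N(Q^{-N})$ for every $N$. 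Thus $\Sigma_Q = Q/\sqrt{d_F} + O_N(Q^{-N})$, which is positive for $Q$ sufficiently large; setting $c_Q := Q/\Sigma_Q$ makes the identity hold at $m = 0$ by construction, and a geometric-series expansion yields $c_Q = \sqrt{d_F} + O_N(Q^{-N})$ for every $N$. The main obstacle is this Poisson step: one must execute it adelically with the correct normalizations and control the off-origin terms uniformly by combining the compact support of $\widehat{W}$ at the finite places of $S$, its Schwartz decay at infinity, and the $S$-integer product formula.
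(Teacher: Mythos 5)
Your proof is correct and follows essentially the same route as the paper: the involution $d \mapsto m/d$ gives the vanishing for $m \neq 0$, the definition $c_Q := Q/\Sigma_Q$ handles $m = 0$ by construction, and Poisson summation together with rapid decay of $\widehat{W}$ (which the paper dispatches with the phrase ``integration by parts'') yields the asymptotic for $c_Q$. The only difference is that you spell out the adelic Poisson-summation normalization and the off-origin decay estimate in more detail than the paper does.
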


\begin{proof}

One has
\begin{align}
\sum_{\substack{d \in \OO_F^S-0\\d|m}}\left( W\left(\frac{d}{\Delta(Q)}\right)-W\left( \frac{m}{d\Delta(Q)}\right)\right)=\begin{cases} 0 &\textrm{ if }m \neq 0\\
\sum_{d \in \OO_F^S} W\left(\frac{d}{\Delta(Q)} \right) &\textrm{ if }m =0\end{cases}
\end{align}
where the first sum is over all $d \in \OO_F^S$ dividing $m$.  This is an infinite set if $F$ is not $\QQ$ or an imaginary quadratic field, but only finitely many of these $d$ yield a nonzero summand for each $m$ and $Q$.  If $Q$ is sufficiently large, then $\sum_{d \in \OO_F^S} W\left(\frac{d}{\Delta(Q)} \right) \neq 0$, and we
define
$$
c_Q:=Q \left(\sum_{d \in \OO_F^S}W\left( \frac{d}{\Delta(Q)}\right)\right)^{-1}.
$$
It is then clear that the stated identity for $\delta^S(m)$ holds.  We are left with proving the bound for $c_Q$.  By Poisson summation, one has
$$
\sum_{d \in \OO_F^S}W\left( \frac{d}{\Delta(Q)}\right)=\frac{Q}{\sqrt{d}_F}\sum_{d \in \OO_F^{S}} \widehat{W}(\Delta(Q)d).
$$
Integration by parts now yields the stated asymptotic for $c_Q$.
\end{proof}

For the remainder of the paper we make the assumption that the $W$ in the proposition satisfies
\begin{align} \label{h-assump}
W_v=\one_{\OO_{F_v}^\times}
\end{align}
for all $v \in S-\infty$.  This makes the considerations of \S \ref{sec-vanish} simpler.

\section{First manipulations with the geometric side}
\label{sec-delta-app}

We now use the notation of \S \ref{ssec-formula}.
Recall that $V_1 \in C_c^\infty((0,\infty))$. Choose $V_2 \in C_c^\infty((0,\infty))$ such that $V_2$ is identically $1$ on the support of $V_1$ and $V_3=\prod_{v \in S}V_{3v} \in C_c^\infty(F_S^\times)$ such that $V_{3v}$ is identically $1$ on a neighborhood of $1$ in $F_v^\times$ for $v|\infty$ and $V_{3v}=\one_{\OO_{F_v}^{\times}}$ for $v \in S-\infty$. 
Write
$$
V(x_1,x_2):=V_1(|x_1|_S)V_2(|x_2|_S)V_3(x_2/x_1).
$$

For $\delta^S$ as in Proposition \ref{prop-delta} one has
\begin{align*}
\Sigma(X)&=\sum_{\gamma=(\gamma_1,\gamma_2)}\sum_{b =(b_1,b_2)}\frac{V(b\det \gamma /X)}{|b_1\det \gamma_1|_SX} \delta^S(P(b,\gamma))\one_{\mathcal{F}}( b \det g_\ell g_r^{-1}) f\one_{\gl_2(\widehat{\OO}_F^S)^{\oplus 2}}(g^{-1}_\ell \gamma g_r)
\end{align*}
where the sums on $\gamma$ and $b$ are over $\GL_2(F)^{\times 2}$ and $(F^\times)^{\oplus 2}$, respectively.
Here $\one_{\mathcal{F}}$ is defined as in assumption (vi).
We observe that the presence of the $V_2$ and $V_3$ in the definition of $V$ is redundant, but it simplifies 
matters when we later apply Poisson summation in \S \ref{sec-ps-in-d} (compare Proposition \ref{prop-arch} and Corollary \ref{cor-h}).  We also note that by definition of $V$ the sum on $b$ is finite in a sense depending only on $g_\ell, g_r$, $V$, $\mathcal{F}$ and $f$.

Applying Proposition \ref{prop-delta} with $Q=\sqrt{X}$ we obtain
\begin{align*}
\Sigma(X)= \sum_{\gamma,b}\frac{V(b\det \gamma/X)}{|b_1\det \gamma_1|_SX} \frac{c_{\sqrt{X}}}{\sqrt{X}}&\sum_{d \in \OO_F^S-0}\one_{d \widehat{\OO}_F^S}(P(b,\gamma))h\left(\frac{d}{\Delta(\sqrt{X})},\frac{P(b,\gamma)}{\Delta(X)} \right)\\&\times\one_{\mathcal{F}}(b\det g_\ell g_r^{-1}) f\one_{\gl_2(\widehat{\OO}_F^S)^{\oplus 2}}(g^{-1}_\ell \gamma g_r).
\end{align*}
\begin{rem} Notice that in the sum above the moduli $d$ satisfy $|d|_v \ll \sqrt{X}^{[F:\QQ]^{-1}}$ for all $v|\infty$ and the $v$-norm of the entries of $\gamma_1,\gamma_2$ is also bounded by $O(\sqrt{X}^{[F:\QQ]^{-1}})$ for $v|\infty$.  Thus it is reasonable to expect that one can shorten the length of the sum by applying Poisson summation in $\gl_2(F) \times \gl_2(F)$.  This is indeed the case.
\end{rem}

 We apply Poisson summation  in  $\gamma=(\gamma_1,\gamma_2) \in \gl_2(F)^{\oplus 2}$ (see \S \ref{ssec-notation}) to arrive at 
\begin{align} \label{after-first-ps}
\Sigma(X)&=d_F^4\sum_{\gamma \in \gl_2(F)^{\oplus 2}}\sum_{b}\int_{\gl_2(\A_F)^{\oplus 2}}\frac{V(b\det T/X)}{|b_1\det T_1|_SX} \frac{c_{\sqrt{X}}}{\sqrt{X}}\sum_{d \in \OO_F^S-0}\one_{d \widehat{\OO}_F^S}(P(b,T))\\& \times h\left(\frac{d}{\Delta(\sqrt{X})},\frac{P(b,T)}{\Delta(X)} \right)\one_{\mathcal{F}}(b\det g_\ell g_r^{-1}) \nonumber  f\one_{\gl_2(\widehat{\OO}_F^S)^{\oplus 2}}(g_\ell^{-1} T g_r) \psi\left(\frac{\mathrm{tr}\, \gamma T}{d}\right)dT. \nonumber
\end{align}
Here $T=(T_1,T_2)$, $dT=dT_1dT_2$ is the Haar measure on $\gl_2(\A_F)^{\oplus 2}$. 
It is convenient to write
\begin{align} \label{decomp}
\Sigma(X)=\Sigma_0(X)+\Sigma^0(X),
\end{align}
where $\Sigma_0(X)$ is the contribution of the $\gamma=(0,0)$ term and $\Sigma^0(X)$ is the contribution of the terms with $\gamma \neq (0,0)$.  We will show in \S \ref{sec-vanish} below that $\Sigma_0(X)$ vanishes under a
mild assumption.

To complete our analysis of $\Sigma^0(X)$ we will apply Poisson summation in $d \in F^\times$ in \S \ref{sec-ps-in-d}.  Before doing this we collect the necessary local computations and bounds in \S \ref{sec-comp} and \S \ref{sec-bounds}, respectively.

\section{Vanishing of $\Sigma_0(X)$} \label{sec-vanish}

Let $v \in S-\infty$ and let $k \in \ZZ_{ > 0}$.  For $\Phi_v \in C_c^\infty(\GL_2(F_v))$ recall assumption (A) of \S \ref{ssec-formula}:
\begin{itemize}
\item[(A)] The function $\Phi_v$ is supported in the set of $g$ with $v(\det g)=k$,\\ $\int_{\GL_2(F_v)}\Phi_v(g)dg=0$, and $\Phi_v \in C_c^\infty(\GL_2(F_v)//\GL_2(\OO_{F_v}))$.
\end{itemize}

It is clear that if $\pi_v$ is an abelian twist of the trivial representation of $\GL_2(F_v)$ and $\Phi_v$ satisfies assumption (A) then $\pi_v(\Phi_v)=0$.  On the other hand, we have the following lemma:

\begin{lem} \label{lem-no-loss}
If $\pi_v$ is the local factor of a generic unitary automorphic representation of $ \GL_2(\A_F)$ unramified at $v$ then there exists a $\Phi_v$ satisfying assumption (A) such that $\pi_v(\Phi_v) \neq 0$.
\end{lem}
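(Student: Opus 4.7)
The plan is to produce $\Phi_v$ as a two-term linear combination of characteristic functions of double cosets. Set $K_v := \GL_2(\OO_{F_v})$, and for $a \in \ZZ$ with $2a \geq k$ let $\Phi_a := \one_{K_v\, \mathrm{diag}(\varpi_v^a, \varpi_v^{k-a})\, K_v}$; the space of bi-$K_v$-invariant functions on $\GL_2(F_v)$ supported in $\{v(\det) = k\}$ is the $\CC$-span of the $\Phi_a$. Write $v_a := \int_{\GL_2(F_v)}\Phi_a(g)\, dg$ and let $\lambda_a$ be the eigenvalue of $\Phi_a$ on the spherical vector of $\pi_v$. Then $\Phi_v := v_{a_2}\Phi_{a_1} - v_{a_1}\Phi_{a_2}$ has vanishing integral by construction, hence satisfies (A), and $\pi_v(\Phi_v)$ acts on the spherical line by the scalar $v_{a_2}\lambda_{a_1} - v_{a_1}\lambda_{a_2}$. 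So it suffices to find $a_1 \neq a_2$ with $\lambda_{a_1}/v_{a_1} \neq \lambda_{a_2}/v_{a_2}$, i.e.\ with the sequence $a \mapsto \lambda_a/v_a$ non-constant.

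Since $\pi_v$ is generic, unramified, and irreducible, it is an unramified principal series with Satake parameters $(\alpha, \beta)$ satisfying the irreducibility condition $\alpha/\beta \neq q_v^{\pm 1}$. A Bruhat-decomposition count gives $v_a = \mathrm{vol}(K_v)(q_v+1)q_v^{n-1}$ for $n := 2a - k \geq 1$, and Macdonald's formula (via Hall--Littlewood polynomials) yields
\[
\lambda_a \;=\; \frac{q_v^{n/2 - 1}(\alpha\beta)^{(k-n)/2}\bigl[\alpha^n(q_v\alpha - \beta) - \beta^n(q_v\beta - \alpha)\bigr]}{\alpha - \beta}.
\]
Consequently the ratio $\lambda_a/v_a$ is a fixed $\CC$-linear combination
\[
\lambda_a/v_a \;=\; C\bigl[\xi^n(q_v\alpha - \beta) - \eta^n(q_v\beta - \alpha)\bigr]
\]
of the exponential sequences $n \mapsto \xi^n$ and $n \mapsto \eta^n$, where $\xi := \alpha/\sqrt{q_v\alpha\beta}$, $\eta := \beta/\sqrt{q_v\alpha\beta}$ (so $\xi\eta = 1/q_v$) and $C$ is a nonzero constant in $\alpha, \beta, q_v, k$.

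By linear independence of distinct geometric sequences, this combination can be constant in $n$ only if one of the bases $\xi, \eta$ equals $1$ while the coefficient of the other exponential vanishes; a brief case check shows that this occurs exactly when $\alpha/\beta \in \{q_v, q_v^{-1}\}$, precisely the reducibility locus excluded by our hypothesis. The coalescent case $\alpha = \beta$ (still consistent with $\alpha/\beta \neq q_v^{\pm 1}$) is handled by a L'H\^opital-type limit in the formula above, which gives $\lambda_a = q_v^{n/2-1}\beta^k[n(q_v-1) + q_v + 1]$, whose ratio with $v_a$ is patently non-constant. The main technical obstacle is the algebraic case check distinguishing the constant-ratio locus from the reducibility locus $\alpha/\beta = q_v^{\pm 1}$; once this is completed, selecting $a_1, a_2$ realizing distinct ratios produces the desired $\Phi_v$ explicitly.
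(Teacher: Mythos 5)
Your proposal is correct but follows a genuinely more general route than the paper's. The paper fixes $k=2$ and exhibits a single explicit test function $\Phi_v=\one_{\varpi_v^2}-(q_v^2+q_v+1)\one_{\varpi_v\GL_2(\OO_{F_v})}$, reads off its spherical eigenvalue $q_v(\alpha^2+\alpha\beta+\beta^2)-(q_v^2+q_v+1)\alpha\beta$ from \cite[Proposition 4.4]{KL}, and observes that vanishing of this scalar would force $\alpha/\beta\in\{q_v,q_v^{-1}\}$, which is then ruled out by the Jacquet--Shalika bound $|\alpha|,|\beta|<q_v^{1/2}$ together with $|\alpha\beta|=1$. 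You instead work in the full span of the double-coset indicators supported in $\{v(\det)=k\}$, take the volume-normalized difference $v_{a_2}\Phi_{a_1}-v_{a_1}\Phi_{a_2}$, and reduce to non-constancy of $n\mapsto\lambda_a/v_a$, which you settle via Macdonald's formula and linear independence of geometric sequences (plus a separate limit computation when $\alpha=\beta$). Your argument buys two things: it produces a suitable $\Phi_v$ for every $k\in\ZZ_{>0}$ (the paper's proof as written only furnishes one with $k=2$, even though assumption (A) permits arbitrary $k$), and it uses only the irreducibility constraint $\alpha/\beta\neq q_v^{\pm 1}$ rather than the temperedness-flavored Jacquet--Shalika inequality. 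The paper's approach buys brevity: no appeal to Macdonald's formula, just a one-line citation for a single Hecke eigenvalue. The ``brief case check'' you defer is routine and contains no gap: writing $a_n:=\lambda_a/v_a$, the recursion $a_{n+2}-(\xi+\eta)a_{n+1}+\xi\eta\,a_n=0$ shows a constant value $c$ would satisfy $c(1-\xi)(1-\eta)=0$; since $\xi\eta=q_v^{-1}\neq 1$, one of $\xi,\eta$ equals $1$ exactly when $\alpha/\beta\in\{q_v,q_v^{-1}\}$ (and then the coefficient of the other exponential indeed vanishes), while $c=0$ forces both $q_v\alpha-\beta$ and $q_v\beta-\alpha$ to vanish, giving $q_v^2=1$, which is impossible.
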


\begin{proof} Let $\varpi_v$ be a uniformizer for $F_v$ and let $q_v:=|\varpi_v|^{-1}$. Consider  
$$
\Phi_v:=\one_{\varpi_v^2}-(q^2_v+q_v+1)\one_{\varpi_v\GL_2(\OO_{F_v})}.
$$
It is a standard result that
$$
\bigcup_{g \in \gl_2(\OO_{F_v}):\,\det g\OO_{F_v}^\times=\varpi^2_v\OO_{F_v}^\times}
\GL_2(\OO_{F_v})g\GL_2(\OO_{F_v})
$$
can be written as a disjoint sum of $q^2_v+q_v+1$ elements of $\GL_2(F_v)/\GL_2(\OO_{F_v})$ \cite[Proof of Proposition 4.4]{KL}.  Therefore $\Phi_v$ satisfies assumption (A) with $k=2$.  

Let $\alpha,\beta \in \CC^\times$ be the Satake parameters of $\pi_v$.  Then $\pi_v(\Phi_v)$ projects the space of $\pi_v$ to the spherical vector and acts via the scalar
\begin{align}
q_v(\alpha^2+\alpha \beta +\beta^2) -(q^2_v+q_v+1)\alpha\beta
\end{align}
on this vector  \cite[Proposition 4.4]{KL}.  If this quantity is zero, then $\alpha/\beta$ is a root of the polynomial $q_vx^2-(q_v^2+1)x+q_v$.  But this is impossible, 
for $|\alpha \beta|=1$ and $|\alpha|,|\beta|<q^{1/2}_v$ since $\pi_v$ is unitary and generic \cite[(2.5)]{JS}.

\end{proof}

Now assume that our test function $f \in C_c^\infty((A \backslash \GL_2(F_S))^{\times 2})$ satisfies
$f=f^v \otimes f_{1v}f_{2v}$ with $f^v \in C_c^\infty((A \backslash \GL_2(F_{S-v}))^{\times 2})$ and $f_{1v},f_{2v} \in C_c^\infty(\GL_2(F_v))$.  Assume moreover that $f_{1v}$ satisfies assumption (A) for a given $k>0$, and that $f_{2v}=\one_{\GL_2(\OO_{F_v})}$.

\begin{rem} By Lemma \ref{lem-no-loss} this assumption is essentially no loss of generality for the purpose of applying our main result, Theorem \ref{main-thm}.
\end{rem}

\begin{lem} \label{lem-vanish} Under the above assumptions $\Sigma_0(X)=0$.
\end{lem}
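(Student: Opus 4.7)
The plan is to isolate the $v$-factor of the $T$-integration in the $\gamma=(0,0)$ contribution to \eqref{after-first-ps} and show that it vanishes by assumption (A). Setting $\gamma=0$ removes the character $\psi(\mathrm{tr}\,\gamma T/d)$, and since the place $v$ belongs to $S$, the cutoff $\one_{\gl_2(\widehat{\OO}_F^S)^{\oplus 2}}(g_\ell^{-1}Tg_r)$ imposes no constraint on $T_{1v}$. Every remaining factor — $V(b\det T/X)$, $|b_1\det T_1|_S^{-1}$, $\one_{d\widehat{\OO}_F^S}(P(b,T))$, and $h(d/\Delta(\sqrt X),P(b,T)/\Delta(X))$ — depends on $T_{1v}$ only through $\det T_{1v}$, since $T_{1v}$ enters these factors exclusively through $b_1\det T_1$ and through $P(b,T)=b_1\det T_1-b_2\det T_2$.

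Granting this, I would change variables $U:=g_{\ell 1,v}^{-1}T_{1v}g_{r1,v}$, so the $v$-factor of the $T$-integration becomes
$$
I_v \;=\; \mathrm{const}\cdot\int_{\gl_2(F_v)} f_{1v}(U)\,\tilde G(\det U)\,dU,
$$
where $\tilde G$ is a function of $\det U$ alone (all other variables frozen). Since $f_{1v}$ is bi-$K_v$-invariant (for $K_v=\GL_2(\OO_{F_v})$) and supported on $v(\det U)=k$, I would expand it as a finite linear combination of characteristic functions of Cartan cells $K_vD_{\alpha,\beta}K_v$, where $D_{\alpha,\beta}=\mathrm{diag}(\varpi_v^\alpha,\varpi_v^\beta)$ and $\alpha+\beta=k$. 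Using that $\det\colon K_v\to\OO_{F_v}^\times$ pushes Haar to Haar, the pushforward of Haar on each cell under $\det$ is uniform on $\varpi_v^k\OO_{F_v}^\times$, yielding
$$
\int_{K_vD_{\alpha,\beta}K_v}\tilde G(\det U)\,dU \;=\; \mathrm{vol}(K_vD_{\alpha,\beta}K_v)\cdot\bar A,\qquad \bar A:=\frac{1}{\mathrm{vol}(\OO_{F_v}^\times)}\int_{\OO_{F_v}^\times}\tilde G(\varpi_v^k u)\,d^\times u,
$$
with $\bar A$ independent of $(\alpha,\beta)$. Pulling $\bar A$ out of the sum reassembles $\bar A\int_{\gl_2(F_v)}f_{1v}(U)\,dU=0$ by (A), so $I_v=0$ and hence $\Sigma_0(X)=0$ after the remaining (absolutely convergent) integrations.

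The main point requiring verification is the claim that $\tilde G$ really depends on $U$ only through $\det U$; in particular, that $\one_{d\widehat{\OO}_F^S}(P(b,T))\cdot h(d/\Delta(\sqrt X),P(b,T)/\Delta(X))$ has no sensitivity to the full matrix $T_{1v}$ beyond its determinant. This follows from the formula $P(b,T)_v=b_{1v}\det T_{1v}-b_{2v}\det T_{2v}$ together with the local assumption \eqref{h-assump} that $W_v=\one_{\OO_{F_v}^\times}$ for $v\in S-\infty$, which keeps the $v$-factor of each $W$-product expressible purely in terms of $\det T_{1v}$ and $d$. Note that the assumption $f_{2v}=\one_{\GL_2(\OO_{F_v})}$ plays no role in this argument; the vanishing uses only the zero-integral condition on $f_{1v}$.
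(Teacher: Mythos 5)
Your proof is correct, but it follows a genuinely different route from the paper's. The paper splits $h(x,y)=W(x)-W(y/x)$ and treats the two resulting $v$-factors (equations \eqref{first} and \eqref{second}) separately: for the $W(x)$ piece the extra factors do not involve $T_{1v}$ at all, so assumption (A) applies directly; for the $W(y/x)$ piece the author shows, using $W_v=\one_{\OO_{F_v}^\times}$, the support condition $v(\det T_1)=k$, $f_{2v}=\one_{\GL_2(\OO_{F_v})}$, and the constraints on $b$, that the factor $\one_{\OO_{F_v}^\times}(P(b,T)/t)$ is \emph{identically} $1$ on the relevant support, reducing to the first case. Your argument instead observes that the \emph{entire} non-$f$ part of the integrand depends on $T_{1v}$ only through $\det T_{1v}$, and then averages over Cartan cells using that $\det_*$ of Haar on a double coset is uniform on $\varpi_v^k\OO_{F_v}^\times$. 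This handles both summands of $h$ in one stroke, needs no case analysis on the size of $t$ or the unit constraints on $b$, and (as you correctly note) makes no use of $f_{2v}=\one_{\GL_2(\OO_{F_v})}$, since the vanishing holds pointwise in the frozen $T_{2v}$. The trade-off is that your route requires the extra structural input that $f_{1v}$ is a finite linear combination of double-coset characteristic functions and that $\det$ pushes Haar to Haar on each cell, whereas the paper's argument is shorter once one believes the ``identically $1$'' claim. Both ultimately rest on $\int_{\GL_2(F_v)} f_{1v}\,dg = 0$ and the observation that on the support of $f_{1v}$ the additive measure $dT_1$ is a constant multiple of multiplicative Haar, so the reduction to (A) is common to both.
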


\begin{proof}
Since $h(x,y)=W(x)-W(y/x)$ it suffices to check that 
\begin{align} \label{first}
\int_{\gl_2(F_v)^{\oplus 2}}  f_{1v}f_{2v}(g_\ell^{-1} T g_r)dT=\int_{\gl_2(F_v)^{\oplus 2}}  f_{1v}\one_{\GL_2(\OO_{F_v})}(g_\ell^{-1} T g_r)dT
\end{align}
and
\begin{align} \label{second}
&\int_{\gl_2(F_v)^{\oplus 2}} W_v\left(\frac{P(b,T)}{t}\right)\one_{\mathcal{F}}(b\det g_\ell g_r^{-1})  f_{1v}f_{2v}(g_\ell^{-1} T g_r)dT\\
&=|\det g_\ell g_r^{-1}|^2_v\int_{\gl_2(F_v)^{\oplus 2}} \one_{\OO_{F_v}^{\times}}\left(\frac{P(b \det g_\ell g_r^{-1},T)}{t}\right)\one_{\mathcal{F}}(b\det g_\ell g_r^{-1})  f_{1v}\one_{\GL_2(\OO_{F_v})}(T )dT \nonumber
\end{align}
are both zero for any $t \in F_v^\times$.  We note that on the support of $f_{1v}\one_{\GL_2(\OO_{F_v})}$ the measure $dT$ is a scalar multiple of the multiplicative Haar measure $dg$.  Thus 
by assumption (A) it is clear that \eqref{first} vanishes.  

On the other hand it not hard to see that the integrand in \eqref{second} is nonzero only if $t \in \OO_{F_v}^{\times}$, and in this case $\one_{\OO_{F_v}^{\times}}\left(\frac{P(b \det g_\ell g_r^{-1},T)}{t}\right)\one_{\mathcal{F}}(b\det g_\ell g_r^{-1})$ is identically $1$ on the support of $f_{1v}\one_{\GL_2(\OO_{F_v})}$.  It follows that \eqref{second} is zero.

\end{proof}

\section{Nonarchimedian computations} \label{sec-comp}

Let $v\not \in S$ be a nonarchimedian place of $F$.  We work locally in this section and drop the subscript $v$, writing $F:=F_v$.  We let $\varpi$ be a uniformizer for $F$ and let $q=|\varpi|^{-1}$.  We assume $b_1,b_2 \in \OO_F^\times$.  
In this section we compute
\begin{align} \label{na-int}
\int_{\OO_F}\int_{\gl_2^{\oplus 2}(\OO_F)}\one_{t \OO_{F}}(P(b,T))\psi\left(\frac{\mathrm{tr}\,\gamma T}{t}\right)dT
\chi(t)|t|^sdt^\times.
\end{align}
The main result is Proposition \ref{prop-na-comp} below.

\begin{lem} \label{lem-DF} Assume that $t \in \OO_F$ with $v(t)>1$.  If $\gamma_0 \in \gl_2(\OO_F)$ and $x \in \OO_F^\times$ then
\begin{align*}
\int_{\gl_2(\OO_F)} \psi\left(\frac{x \det T+\mathrm{tr}\,\gamma_0 T}{t}\right)dT=|t|^2\psi\left(\frac{- \det \gamma_0}{xt} \right).
\end{align*}

\end{lem}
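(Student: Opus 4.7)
My plan is to complete the square to decouple the quadratic form $x\det T$ from the linear form $\mathrm{tr}(\gamma_0 T)$, and then evaluate the resulting Gauss-sum-type integral. Write $T = \begin{pmatrix} a & b \\ c & d \end{pmatrix}$, $\gamma_0 = \begin{pmatrix} \alpha & \beta \\ \gamma & \delta \end{pmatrix}$, and let $\gamma_0^\ast := \begin{pmatrix}\delta & -\beta \\ -\gamma & \alpha\end{pmatrix}$ denote the classical adjugate of $\gamma_0$. Expanding the product on the right verifies the identity
\[
x \det T + \mathrm{tr}(\gamma_0 T) \;=\; x \det(T + x^{-1}\gamma_0^\ast) \;-\; x^{-1}\det\gamma_0,
\]
a routine $2\times 2$ check. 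Since $\gamma_0 \in \gl_2(\OO_F)$ and $x \in \OO_F^\times$, the matrix $x^{-1}\gamma_0^\ast$ has entries in $\OO_F$, so the translation $T \mapsto T + x^{-1}\gamma_0^\ast$ is a measure-preserving bijection of $\gl_2(\OO_F)$.

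After this change of variables I pull the constant $\psi(-\det\gamma_0/(xt))$ outside, reducing to showing $\int_{\gl_2(\OO_F)}\psi(x\det T/t)\, dT = |t|^2$. By Fubini this integral factors as
\[
\left(\int_{\OO_F^2} \psi(xad/t)\, da\, dd\right)\left(\int_{\OO_F^2} \psi(-xbc/t)\, db\, dc\right).
\]
Because $v \notin S$ and $\OO_F/\ZZ$ is unramified outside $S$, the local additive character $\psi_v$ has conductor $\OO_F$. Orthogonality gives $\int_{\OO_F}\psi((xa/t)d)\, dd = \one_{\OO_F}(xa/t) = \one_{t\OO_F}(a)$ (using $x \in \OO_F^\times$), and integrating over $a \in \OO_F$ yields $|t|$. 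The other factor contributes another $|t|$ by the same computation, producing $|t|^2$ total and the claimed identity.

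The only point of actual care is the completion-of-the-square identity, but this is just $2\times 2$ matrix algebra; everything else reduces to bookkeeping with Haar measures and orthogonality of the additive character on $F_v$. The hypothesis $v(t)>1$ is not strictly required for the identity in the form stated and is presumably imposed to match the context in which the lemma is subsequently applied.
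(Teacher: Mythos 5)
Your argument is correct: the completion-of-square identity checks out by direct expansion, the translation $T \mapsto T + x^{-1}\gamma_0^\ast$ preserves $\gl_2(\OO_F)$ because $x \in \OO_F^\times$, and the resulting Gauss integral factors cleanly via Fubini and evaluates to $|t|^2$ by orthogonality of the (conductor-$\OO_F$) additive character at $v \notin S$. You are also right that the hypothesis $v(t) > 1$ plays no role in your proof, which works uniformly for all nonzero $t \in \OO_F$.

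Your route is genuinely different from the paper's. The paper proves Lemma \ref{lem-DF} by invoking the $p$-adic stationary phase theorem of Dabrowski and Fisher \cite{DF}: it forms $\mathcal{F}_x(T) = x\det T + \mathrm{tr}(\gamma_0 T)$, computes $\nabla\mathcal{F}_x$ and the Hessian $H_T(x)$ (a unit multiple of a fixed permutation-type matrix), deduces that the critical scheme $D_x$ is \'etale over $\ZZ_p$ with a unique $\ZZ_p$-point, identifies that point as exactly $-x^{-1}\gamma_0^\ast$, and reads off the answer from \cite[Theorem 1.4]{DF} after checking the quadratic Gauss factor $G_t(H_T(x))$ equals $1$. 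Because that theorem as applied requires $v(t)>1$, the paper then needs the separate Lemma \ref{lem-Gaussian} for $v(t)=1$, and that lemma's proof is precisely your completion-of-square trick (the map $f$ there is the adjugate). Your observation therefore unifies the two lemmas and shows the stationary phase machinery is unnecessary for these exact evaluations: a quadratic phase whose Hessian is a unit can always be decoupled from its linear part by an integral translation. The stationary phase formalism pays off later, in Proposition \ref{prop-na-bound}, where $x$ is integrated over $\mathcal{D}_F^{-1}$ (so it may be a non-unit), the Hessian's valuation is no longer zero, and one wants a uniform bound rather than an exact identity; there the apparatus of \cite{DF} is doing real work, while your elementary decoupling would require additional bookkeeping to track the degeneracy.
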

\begin{proof} Let
\begin{align*}
\mathcal{F}_{x}(T):=&x\det T+\mathrm{tr}\, \gamma_0 T.
\end{align*}
Let $p$ be the rational prime below $v$ and let 
$$
D_x:=\mathrm{Res}_{\OO_F/\ZZ_p}\mathrm{Spec}(\OO_F[T]/(\nabla \mathcal{F}_x)) \subset
\A_{\ZZ_p}^{4[F\,:\,\QQ_p]} \quad \textrm{ (affine }4[F:\QQ_p]\textrm{ space).}
$$
Here $(\nabla \mathcal{F}_x) \leq \OO_F[T]$ is the ideal generated by the entries of 
$\nabla \mathcal{F}_x$.  
Writing $T=(t_{ij})$ and $\gamma_0=(\gamma_{ij})$ one has
\begin{align*}
\nabla \mathcal{F}_{x}(T)=\begin{pmatrix} \gamma_{11}+x t_{22} \\
\gamma_{12}-x t_{12}\\
\gamma_{21}-x t_{21} \\
\gamma_{22}+x  t_{11}\end{pmatrix}.
\end{align*}
The Hessian is
\begin{align*}
H_{T}(x)&=x \begin{pmatrix} & & & 1 \\ & & -1 &\\ & -1 & &\\ 1 & & &  \end{pmatrix}.
\end{align*}
We note that since $\varpi \nmid x$, the Hessian $H_T(x)$ is nonvanishing on $D_x(\ZZ_p/p)$, and we conclude that $D_x$ is \'etale over $\ZZ_p$.  Applying \cite[Theorem 1.4]{DF} (a result which the authors attribute to Katz) we have that 
\begin{align} \label{fixed-x22}
\int_{\gl_2(\OO_F)} \psi\left(\frac{x \det T+\mathrm{tr}\,\gamma_0 T}{t}\right)dT=|t|^2\sum_{T \in D_x(\ZZ_p)}\psi\left(\frac{\mathcal{F}_x(T)}{t} \right)G_t(H_T(x))
\end{align}
where 
\begin{align*}
G_t(H_T(x))=\begin{cases}1 & \textrm{ if } 2|v(t)\\
q^{-2}\sum_{X \in (\OO_F/\varpi)^{4}} \psi\left( \frac{X^t\tfrac{1}{2}H_T(x)X}{\varpi}\right)& \textrm{ if } 2 \nmid v(t).\end{cases}
\end{align*}
Here some care is required in interpreting the transpose; we must view $X$ as a vector in $(\OO_F/\varpi)^{4}$ with respect to the basis we used in writing down the Hessian. It is not hard to directly compute that $G_t(H_T(x))=1$.  Therefore \eqref{fixed-x22} becomes
\begin{align} \label{fixed-x32}
|t|^2\sum_{T \in D_x(\ZZ_p)}\psi\left(\frac{\mathcal{F}_x(T)}{t} \right).
\end{align}
The set $D_x(\ZZ_p)$ contains only one element, namely
$$
D_x(\ZZ_p)=
\left\{\frac{1}{x}\begin{pmatrix} -\gamma_{22} & \gamma_{12} \\ \gamma_{21}& -\gamma_{11}\end{pmatrix}\right\}
$$
and thus
\begin{align*}
\int_{\gl_2(\OO_F)}\psi\left(\frac{x \det T+\mathrm{tr}\,\gamma_0 T}{t}\right)dT=|t|^2\psi\left( \frac{-\det \gamma_0}{xt}\right).
\end{align*}
\end{proof}

The following is the analogue for $v(t)=1$:

\begin{lem} \label{lem-Gaussian}  If $\gamma_0 \in \gl_2(\OO_F)$ and $x \in \OO_F^\times$ then
\begin{align*}
\int_{\gl_2(\OO_F)} \psi\left(\frac{x \det T+\mathrm{tr}\,\gamma_0 T}{\varpi}\right)dT=q^{-2}\psi\left(\frac{- \det \gamma_0}{x\varpi} \right).
\end{align*}

\end{lem}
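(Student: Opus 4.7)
The plan is to observe that the integrand depends only on $T\bmod\varpi$, reducing the integral to a finite character sum over $\gl_2(k)$ where $k:=\OO_F/\varpi$. Since $\mathrm{vol}(\gl_2(\OO_F))=1$ with the standard measure and the reduction map $\gl_2(\OO_F)\to\gl_2(k)$ has fibers of volume $q^{-4}$, the integral equals $q^{-4}$ times the corresponding sum over $T\in\gl_2(k)$. Letting $\bar\psi$ be the nontrivial additive character of $k$ obtained by restricting $\psi$ to $\varpi^{-1}\OO_F/\OO_F$, and writing $T=\begin{pmatrix}a&b\\c&d\end{pmatrix}$ and $\gamma_0=(\gamma_{ij})$, the reduced sum becomes
$$\sum_{(a,b,c,d)\in k^{4}}\bar\psi\bigl(x(ad-bc)+\gamma_{11}a+\gamma_{22}d+\gamma_{21}b+\gamma_{12}c\bigr),$$
which factors as a product of independent sums over the pairs $(a,d)$ and $(b,c)$.

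Next I would evaluate each factor by inner summation. In $\sum_{a,d}\bar\psi(xad+\gamma_{11}a+\gamma_{22}d)$, summing first over $a$ yields $q$ when $xd+\gamma_{11}=0$ in $k$ (which, since $x\in\OO_F^\times$, has the unique solution $d=-\gamma_{11}/x$) and zero otherwise, leaving $q\,\bar\psi(-\gamma_{11}\gamma_{22}/x)$. An identical computation for the $(b,c)$-sum yields $q\,\bar\psi(\gamma_{12}\gamma_{21}/x)$. Multiplying, the character sum equals $q^{2}\bar\psi(-\det\gamma_0/x)$, so the original integral equals $q^{-2}\bar\psi(-\det\gamma_0/x)$.

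Finally, via the identification $\bar\psi(\alpha)=\psi(\tilde\alpha/\varpi)$ for any lift $\tilde\alpha\in\OO_F$ of $\alpha\in k$, applied to $\alpha=-\det\gamma_0/x\bmod\varpi$, this equals $q^{-2}\psi(-\det\gamma_0/(x\varpi))$, as claimed. This approach bypasses the stationary-phase formalism of \cite{DF} used for Lemma \ref{lem-DF}, which is natural because when $v(t)=1$ the phase is already of ``level $1$'' in $\varpi$ and the integral collapses to a finite Fourier transform on $\gl_2(k)$; the only mild obstacle is keeping careful track of the volume normalization and the dictionary between $\bar\psi$ and $\psi$.
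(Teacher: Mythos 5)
Your proof is correct, and the route is genuinely different from the paper's. You reduce directly to a finite exponential sum over $\gl_2(\OO_F/\varpi)$, observe that the phase $x(ad-bc)+\gamma_{11}a+\gamma_{12}c+\gamma_{21}b+\gamma_{22}d$ factors into independent bilinear pieces in the pairs $(a,d)$ and $(b,c)$, and evaluate each factor by orthogonality of characters; the two resulting Gauss-type sums each contribute $q$ times a phase, giving $q^{2}\bar\psi(-\det\gamma_0/x)$ in one shot, with no case distinction on $\gamma_0$. The paper instead first treats $\gamma_0=0$ by splitting $\gl_2(\OO_F/\varpi)$ into invertible and singular matrices and counting (using $|\SL_2|$ and $|\GL_2|$ over the residue field and $\sum_{\alpha\in k^\times}\bar\psi(x\alpha)=-1$), and then reduces general $\gamma_0$ to this case via the identity $\det(T+X)=\det T+\det X+\mathrm{tr}(f(X)T)$, i.e.\ by completing the square for the quadratic form $\det$ — a translation trick that mirrors the stationary-phase viewpoint used in Lemma~\ref{lem-DF}. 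Your version is shorter and more self-contained (pure orthogonality, no counting of $\SL_2$ or $\GL_2$ points, no translation identity), while the paper's version makes the ``one critical point, Hessian nondegenerate'' structure transparent and hence parallels Lemma~\ref{lem-DF} more closely. One small thing worth stating explicitly in your write-up: the reduction to $T\bmod\varpi$ uses that $v\notin S$, so that the additive character $\psi_v$ has conductor exactly $\OO_{F_v}$ (this is where the assumption that $\OO_F/\ZZ$ is unramified outside $S$ enters); otherwise ``$\bar\psi$ nontrivial on $\varpi^{-1}\OO_F/\OO_F$'' would need justification.
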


\begin{proof}

We begin with the case $\gamma_0=0$.  In this case we compute
\begin{align*}
&\int_{\gl_2(\OO_F)} \psi\left( \frac{x \det T}{\varpi}\right)dT
\\&=q^{-4}\sum_{g \in \GL_2(\OO_F/\varpi)} \psi\left( \frac{x \det g}{\varpi} \right)+q^{-4} \sum_{\substack{T \in \gl_2(\OO_F/\varpi)\\ \det T=0}}1\\
&=q^{-4}\left(\sum_{g \in \SL_2(\OO_F/\varpi)} \sum_{\alpha \in (\OO_F/\varpi)^\times}\psi\left(\frac{x\alpha}{\varpi} \right) +q^4-(q^2-1)(q^2-q) \right)\\
&=q^{-4}\left( -\frac{(q^2-1)(q^2-q)}{q-1}+q^3+q^2-q\right)=q^{-2}.
\end{align*}

For the general case, let $X=\left( \begin{smallmatrix} x_1 & x_2 \\ x_3 & x_4 \end{smallmatrix}\right)$, $T=\left( \begin{smallmatrix} t_1 & t_2 \\ t_3 & t_4 \end{smallmatrix} \right)$.  Then we start by observing that 
$$
\det (T+X)=\det T+\det X+\mathrm{tr}(f(X)T)
$$
where $f$ is the $\OO_F$-linear isomorphism
\begin{align*}
f:\gl_2(\OO_F) &\lto \gl_2(\OO_F)\\
X &\longmapsto \left( \begin{smallmatrix}  x_4&-x_2\\-x_3 &x_1\end{smallmatrix}\right).
\end{align*}
Taking $x=1$ and $X=f^{-1}(\gamma_0)$
we therefore have
\begin{align*}
&\int_{\gl_2(\OO_F)} \psi\left(\frac{ \det T+\mathrm{tr}\,\gamma_0 T}{\varpi}\right)dT\\&=\int_{\gl_2(\OO_F)} \psi\left(\frac{ \det (T-f^{-1}(\gamma_0))+\mathrm{tr}\,\gamma_0 T-\mathrm{tr}\, \gamma_0 f^{-1}(\gamma_0)}{\varpi}\right)dT
\nonumber \\
&=\int_{\gl_2(\OO_F)} \psi\left(\frac{ \det T+ \det f^{-1}(\gamma_0)-\mathrm{tr}\, \gamma_0 f^{-1}(\gamma_0)}{\varpi}\right)dT \nonumber\\
&=q^{-2}\psi\left(\frac{-\det \gamma_0}{\varpi} \right).
\end{align*}
The general case (i.e. when $x \neq 1$) follows from a change of variables $\gamma_0 \mapsto x^{-1}\gamma_0$.
\end{proof}

\begin{prop} \label{prop-na-comp} The integral
\begin{align*}
\int_{\OO_F}\int_{\gl_2^{\oplus 2}(\OO_F)}\one_{t \OO_{F}}(P(b,T))\psi\left(\frac{\mathrm{tr}\,\gamma T}{t}\right)dT
\chi(t)|t|^sdt^\times
\end{align*}
is equal to 
\begin{align*}
\int_{\OO_F} \chi(c)|c|^s\one_{gl_2^{\oplus 2}(\OO_F)}(c^{-1}\gamma)(1-\chi(\varpi)q^{-s-5})\int_{\OO_F}
\one_{t\OO_F}(P(b^{-1},c^{-1}\gamma))\chi(t)|t|^{s+4}dt^\times dc^\times.
\end{align*}

\end{prop}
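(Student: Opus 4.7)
The plan is to evaluate the inner integral over $T\in\gl_2^{\oplus 2}(\OO_F)$ explicitly by Fourier-expanding the indicator $\one_{t\OO_F}(P(b,T))$, invoking the stationary-phase Lemmas \ref{lem-DF}--\ref{lem-Gaussian}, collapsing the resulting character sum via a Ramanujan-sum computation, and then re-indexing via the substitution $t=ct'$ to recover a product of two Mellin-type integrals.

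First I will apply the identity $\one_{t\OO_F}(m)=|t|\sum_{c\in t^{-1}\OO_F/\OO_F}\psi(cm)$ to $m=P(b,T)=b_1\det T_1-b_2\det T_2$, parametrizing $c=c_0/t$ with $c_0\in\OO_F/t\OO_F$. The phase splits additively in $T_1$ and $T_2$, so the inner integral factors as $|t|\sum_{c_0}I_1(c_0)I_2(c_0)$, where
$$
I_i(c_0)=\int_{\gl_2(\OO_F)}\psi\!\left(\frac{\epsilon_i c_0 b_i\det T_i+\mathrm{tr}\,\gamma_i T_i}{t}\right)dT_i\qquad (\epsilon_1=1,\ \epsilon_2=-1).
$$
For $c_0=0$, self-duality of $\gl_2(\OO_F)$ under the trace pairing gives $I_i(0)=\one_{t\gl_2(\OO_F)}(\gamma_i)$. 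For $c_0\neq 0$ with $v(c_0)=j$ and $n=v(t)$, a character-orthogonality argument over translates by $\varpi^{n-j}\gl_2(\OO_F)/\varpi^n\gl_2(\OO_F)$ shows $I_i(c_0)=0$ unless $\gamma_i\in\varpi^j\gl_2(\OO_F)$; in the latter case setting $c_0=\varpi^j u$, $t'=t/\varpi^j$, $\gamma_i'=\gamma_i/\varpi^j$ reduces the phase to $(\epsilon_i u b_i\det T_i+\mathrm{tr}\,\gamma_i' T_i)/t'$, to which Lemma \ref{lem-DF} (or Lemma \ref{lem-Gaussian}) applies, yielding $I_i(c_0)=|t|^2 q^{2j}\,\psi(-\det\gamma_i/(\epsilon_i c_0 b_i t))$.

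The product $I_1(c_0)I_2(c_0)$ produces the single phase $\psi(-P(b^{-1},\gamma)/(c_0 t))$, and summing over $c_0$ of fixed valuation $j$ is a classical Ramanujan sum equal to $q^{n-j}\one_{v(P(b^{-1},\gamma))\geq n+j}-q^{n-j-1}\one_{v(P(b^{-1},\gamma))\geq n+j-1}$. Integrating against $\chi(t)|t|^s dt^\times$ and substituting $c=\varpi^j$, $t'=\varpi^{n-j}$ so that $t=ct'$, the constraint $j\leq v(\gamma)$ becomes $\one_{\gl_2^{\oplus 2}(\OO_F)}(c^{-1}\gamma)$, and the Ramanujan indicators become $\one_{t'\OO_F}$ and $\one_{(t'/\varpi)\OO_F}$ evaluated at $P(b^{-1},c^{-1}\gamma)=c^{-2}P(b^{-1},\gamma)$. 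Combining the $c_0=0$ contribution (which corresponds to $t'=1$) with the Ramanujan contributions for $v(t')\geq 1$, and then absorbing the ``$-q^{-1}\one_{(t'/\varpi)\OO_F}$'' piece by the shift $t'\to\varpi t''$, exhibits the prefactor $(1-\chi(\varpi)q^{-s-5})$ and leaves exactly $\int_{\OO_F}\chi(t')|t'|^{s+4}\one_{t'\OO_F}(P(b^{-1},c^{-1}\gamma))dt'^\times$ as the inner integral, completing the identification with the claimed expression.

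I expect the main obstacle to be the case analysis of $I_i(c_0)$ for $c_0\neq 0$: the vanishing statement requires correctly identifying the group of translations $\varpi^{n-j}\gl_2(\OO_F)/\varpi^n\gl_2(\OO_F)$ along which the integrand transforms by a nontrivial character, and the nonvanishing case requires a precise rescaling so as to match the hypotheses of Lemmas \ref{lem-DF}--\ref{lem-Gaussian} without losing track of the factors $|t|^2$, $q^{2j}$, and the Fourier-expansion prefactor $|t|$. The subsequent Ramanujan and substitution steps are conceptually transparent---the ``$-q^{-1}$'' in Ramanujan is exactly what telescopes to produce the $(1-\chi(\varpi)q^{-s-5})$ prefactor---but they require carrying the powers of $q$, $|t|$, and the valuations $v(\gamma)$ and $v(P(b^{-1},\gamma))$ through the computation with care.
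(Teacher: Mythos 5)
Your proposal parallels the paper's proof closely in spirit: both expand the indicator $\one_{t\OO_F}(P(b,T))$ in additive characters, both invoke Lemmas \ref{lem-DF}--\ref{lem-Gaussian} to evaluate the resulting $\gl_2$-integrals via $p$-adic stationary phase, and both extract the $(1-\chi(\varpi)q^{-s-5})$ factor by telescoping over the extra power of $q^{-1}$ produced by restricting to units. The only structural difference is that you use the finite Fourier expansion $\one_{t\OO_F}(m)=|t|\sum_{c_0\in\OO_F/t\OO_F}\psi(c_0m/t)$ and treat $c_0=0$ separately via self-duality, whereas the paper writes $\one_{t\OO_F}(m)=\int_{\OO_F}\psi(xm/t)\,dx$, stratifies by $v(x)$ via $x=cx'$, and changes variables $t\mapsto ct$ so that all strata (including the analogue of your $c_0=0$ term) are treated uniformly. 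Your $\sum_{c_0}$-with-$v(c_0)=j$-fixed-Ramanujan step is exactly the discrete shadow of the paper's $\int_{\OO_F^\times}=\int_{\OO_F}-\int_{\varpi\OO_F}$ step.

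There is, however, a real gap in your final sentence. If you actually carry out the bookkeeping you describe, the $c$-exponent does \emph{not} come out to $|c|^s$ as in the displayed statement. Tracing your own factors: from the Fourier prefactor you get $|t|=|c||t'|$, from $I_1I_2$ you get $|t|^4q^{4j}=|c|^4|t'|^4q^{4j}=|t'|^4$ (since $q^{4j}=|c|^{-4}$), and from the $|t'|^{-1}$ you pull out of the Ramanujan sum you land on $|c|\,|t'|^4$ overall; multiplying by $\chi(t)|t|^s=\chi(c)|c|^s\chi(t')|t'|^s$ gives $\chi(c)|c|^{s+1}\chi(t')|t'|^{s+4}$. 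So the correct identity has $\chi(c)|c|^{s+1}$, not $\chi(c)|c|^{s}$. One can confirm this by a sanity check at $\gamma=(0,0)$ and $\chi$ unramified: the coefficient of $\chi(\varpi)q^{-s}$ in the left-hand side is $q^{-1}+q^{-4}-q^{-5}$ (a direct matrix count over $\gl_2(\FF_q)^{\oplus 2}$), which matches the expansion of
\[
\frac{1-\chi(\varpi)q^{-s-5}}{(1-\chi(\varpi)q^{-s-1})(1-\chi(\varpi)q^{-s-4})}
\]
and not the expansion with a $(1-\chi(\varpi)q^{-s})$ in the denominator. The statement as printed has a typo ($|c|^s$ for $|c|^{s+1}$); the downstream application in \eqref{before-move} actually uses $|c|_S^{-s-1}=(|c|^S)^{s+1}$, which is consistent with the corrected exponent. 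So you should not ``complete the identification with the claimed expression'' as written — you should instead note that your derivation (and the paper's, for that matter, where the step from $|c|^{s+1}$ to $|c|^s$ in the displayed chain is unjustified) produces $|c|^{s+1}$, and that this is what is used later. Apart from this, the vanishing argument for $I_i(c_0)$ when $\gamma_i\notin\varpi^j\gl_2(\OO_F)$, the rescaling to invoke Lemmas \ref{lem-DF}--\ref{lem-Gaussian}, and the Ramanujan evaluation are all correctly set up.
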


\begin{proof}
We start by writing
\begin{align*}
&\int_{\OO_F}\int_{\gl_2^{\oplus 2}(\OO_F)}\one_{t \OO_{F}}(P(b,T))\psi\left(\frac{\mathrm{tr}\,\gamma T}{t}\right)dT
\chi(t)|t|^sdt^\times\\
&=\int_{\OO_F}\int_{\gl_2^{\oplus 2}(\OO_F)}
\int_{\OO_F}\psi\left(
\frac{xP(b,T)+\mathrm{tr}\,\gamma T}{t}\right)dx dT
\chi(t)|t|^sdt^\times\\
&=\int_{\OO_F} \sum_{c|t}|c|\int_{\gl_2^{\oplus 2}(\OO_F)} \int_{\OO_F^\times} \psi\left(\frac{xcP(b,T)+\mathrm{tr}\,\gamma T}{t} \right)dx dT
\chi(t)|t|^sdt^\times
\end{align*}
where the sum on $c$ is over divisors of $t$ in $\OO_F$.
We take a change of variables $t \mapsto ct$ to arrive at 
\begin{align*}
\sum_{c} \chi(c)|c|^{s+1}\int_{\OO_F}\int_{\OO_F^\times}\int_{\gl_2^{\oplus 2}(\OO_F)}  \psi\left(\frac{xP(b,T)+\mathrm{tr}\,c^{-1}\gamma T}{t} \right)dT dx \chi(t)|t|^sdt^\times
\end{align*}
where the sum on $c$ is over representatives for the ideals of $\OO_F$.
We now invoke lemmas \ref{lem-DF} and \ref{lem-Gaussian} to write the above as 
\begin{align*}
&\sum_{c} \chi(c)|c|^{s+1}\one_{\gl_2^{\oplus 2}(\OO_F)}(c^{-1}\gamma)\int_{\OO_F}\int_{\OO_F^\times}
  \psi\left(\frac{x(-b_1^{-1}\det c^{-1}\gamma_1+b_2^{-1}\det c^{-1}\gamma_2)}{t} \right)dT dx \chi(t)|t|^{s+4}dt^\times \\
  &=\sum_{c} \chi(c)|c|^s\one_{\gl_2^{\oplus 2}(\OO_F)}(c^{-1}\gamma)\\& \times \left(\int_{\OO_F}
\one_{t\OO_F}(P(b^{-1},c^{-1}\gamma))\chi(t)|t|^{s+4}dt^\times-
q^{-1}\int_{\varpi\OO_F}\one_{t\varpi^{-1}}(P(b^{-1},c^{-1}\gamma))\chi(t)|t|^{s+4}dt^\times\right)\\
&=\int_{\OO_F} \chi(c)|c|^s\one_{\gl_2^{\oplus 2}(\OO_F)}(c^{-1}\gamma)(1-\chi(\varpi)q^{-s-5})\int_{\OO_F}
\one_{t\OO_F}(P(b^{-1},c^{-1}\gamma))\chi(t)|t|^{s+4}dt^\times dc^\times.
\end{align*}

\end{proof}

\section{Bounds for local integrals} \label{sec-bounds}

In this section we collect the rough bounds on local integrals we require to analyze $\Sigma(X)$ in \S \ref{sec-ps-in-d} below.
For the remainder of this paper, if $v$ is a place of $F$  we set
$$
|\left(\begin{smallmatrix}\gamma_{11} & \gamma_{12} \\ \gamma_{21} & \gamma_{22} \end{smallmatrix}\right)|_v:=\mathrm{max}(|\gamma_{ij}|_v).
$$
Moreover, if $\gamma =(\gamma_1,\gamma_2)\in \gl_2^{\oplus 2}(F_v)$ then we set
$|\gamma|_v:=\max(|\gamma_1|_v,|\gamma_2|_v)$.

\subsection{Archimedian integrals}

For this subsection we work locally at a fixed place $v|\infty$ and omit it from notation, writing $F:=F_v$, etc.
Let $W \in C_c^\infty(F^\times)$, $f \in C_c^\infty(\gl_2^{\oplus 2}(F))$, $b=(b_1,b_2) \in (F^\times)^{\oplus 2}$ and assume
$$
|b_1| \asymp |b_2| \asymp 1
$$
for all $v|\infty$.  All implied constants in this section are allowed to depend on $W$, $f$, and the bounds on $b$.
Let
$$
\chi:F^\times \lto \CC^\times
$$
be a (unitary) character, and let $s \in \CC$.  
For $t \in \RR$ we let $C(\chi,t)$ be the analytic conductor of $\chi$ normalized as in \cite[\S 1]{Brumley}.

\begin{prop} \label{prop-arch}
Let $h_0(x,y)$ be either $W(x)$ or $W(y/x)$.
Consider the integral
\begin{align*}
\int_{F^\times}\left(\int_{\gl_2(F)^{\oplus 2}}
h_0(t,P(b,T))
f(T) \psi\left(\frac{\mathrm{tr}\,\gamma T}{t}\right)dT\right)\chi(t)|t|^sdt^\times.
\end{align*} 
Let
$N \in \ZZ_{ \geq 0}$,  $1>\varepsilon>0$, $\lambda>\varepsilon$.

If $\gamma =(0,0)$ then for any  and $s \in \CC$ with $\lambda >\mathrm{Re}(s)>\varepsilon-1$ the integral is $O_{N,\varepsilon,\lambda}(C(\chi,\mathrm{Im}(s))^{-N})$.

If $\gamma \neq (0,0)$ and $s \in \CC$ with $\lambda >\mathrm{Re}(s)>\varepsilon-4$ then the integral 
is
$$
O_{N,\lambda,\varepsilon}\left(\mathrm{max}(
|\gamma|,1)^{-N} C(\chi,\mathrm{Im}(s))^{-N}|\gamma|^{ 12-64/\varepsilon }\right).
$$

\end{prop}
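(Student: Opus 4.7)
The strategy is to pass to a convenient Mellin-transform normalization in the $t$-variable and exploit oscillation from $\psi(\mathrm{tr}\,\gamma T/t)$ in the $T$-variables; the details differ depending on the form of $h_0$.

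\emph{Case $h_0(x,y) = W(x)$.} Since $h_0$ is independent of $T$, the $T$-integral collapses to the Fourier transform $\hat f(\gamma/t)$, leaving $I = \int_{F^\times} W(t)\chi(t)|t|^s \hat f(\gamma/t)\, d^\times t$. For $\gamma = (0,0)$ this is $\hat f(0)$ times the Mellin transform of $W$, which is rapidly decreasing in $C(\chi,\mathrm{Im}(s))$ since $W \in C_c^\infty(F^\times)$. For $\gamma \neq (0,0)$, substitute $u = 1/t$; on the support of $W(1/\cdot)$ the variable $u$ is bounded away from $0$ and $\infty$, so the Schwartz decay of $\hat f$ gives $|\hat f(u\gamma)| \ll_N |\gamma|^{-N}$, and integration by parts in $u$ against $\bar\chi(u)|u|^{-s}$ supplies $C(\chi,\mathrm{Im}(s))^{-N}$; no polynomial loss is required in this case.

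\emph{Case $h_0(x,y) = W(y/x)$.} Substitute $u = 1/t$, interchange orders of integration, and then (for $P(b,T) \neq 0$) substitute $v = u P(b,T)$. This recasts the integral as
\begin{align*}
I = \int_{\gl_2^{\oplus 2}(F)} f(T)\,|P(b,T)|^s\,\chi(P(b,T))\,\Psi\!\left(\frac{\mathrm{tr}\,\gamma T}{P(b,T)},\,s,\,\bar\chi\right) dT,
\end{align*}
where $\Psi(\xi,s,\bar\chi) := \int_{F^\times} W(v)\psi(v\xi)\bar\chi(v)|v|^{-s}\, d^\times v$. Integration by parts in $v$ against $\psi(v\xi)$, combined with the fact that the Euler operator $v\partial_v$ acts diagonally on $\bar\chi(v)|v|^{-s}$ with eigenvalue of size $\asymp C(\chi,\mathrm{Im}(s))$, yields both $|\Psi(\xi,s,\bar\chi)| \ll_N \min\bigl(1,(C(\chi,\mathrm{Im}(s))/|\xi|)^N\bigr)$ and, separately, $|\Psi(\xi,s,\bar\chi)| \ll_M (1+|\xi|)^M C(\chi,\mathrm{Im}(s))^{-M}$. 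For $\gamma = (0,0)$ one has $\Psi(0,s,\bar\chi) = \widetilde W(-s,\bar\chi)$, which gives the $C(\chi,\mathrm{Im}(s))^{-N}$ decay, and the $T$-integral converges absolutely for $\mathrm{Re}(s) > -1$ since $\{P=0\}$ is generically a smooth hypersurface. For $\gamma \neq (0,0)$, I would split the $T$-integration by the sizes of $|P(b,T)|$ and $|\mathrm{tr}\,\gamma T|$: in the ``good'' region where $|\mathrm{tr}\,\gamma T/P(b,T)|$ is large the rapid decay of $\Psi$ supplies the $|\gamma|^{-N} C(\chi,\cdot)^{-N}$ factor, while in the complementary region near $\{P=0\} \cup \{\mathrm{tr}\,\gamma T = 0\}$ one uses the trivial bound on $\Psi$ together with a volume estimate.

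The main obstacle is the delicate balancing of cutoffs in the $\gamma \neq (0,0)$, $h_0 = W(y/x)$ case when $\mathrm{Re}(s)$ is close to $-4$. Near that edge the singularity $|P|^{\varepsilon - 4}$ is on the verge of non-integrability, and one must excise a $|\gamma|^{-\alpha}$-neighborhood of $\{P=0\}$ at cost $|\gamma|^{\alpha(4-\varepsilon)}$ to be balanced against the gain $|\gamma|^{-N}$ from the oscillatory bound on $\Psi$. The polynomial factor $|\gamma|^{12-64/\varepsilon}$ emerges from this optimization: the term $64/\varepsilon$ tracks the worst-case excision cost (scaling like $1/\varepsilon$, with a constant reflecting the real dimension of $\gl_2^{\oplus 2}(F)$), while the $12$ is a fixed contribution from the derivatives of $\Psi$ and the cutoff geometry.
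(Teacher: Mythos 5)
Your treatment of the two easier cases ($h_0 = W(x)$, and $h_0 = W(y/x)$ with $\gamma=(0,0)$) parallels the paper: the first is indeed immediate, and the second reduces exactly as you say to the Mellin transform of $W$ times the pushforward of $f$ along $P(b,\cdot)$, which is a Schwartz function because $P$ is submersive away from the origin. The genuine gap is in the main case, $h_0 = W(y/x)$ and $\gamma\neq (0,0)$. After your substitution $v = P(b,T)/t$, all of the oscillation coming from $\gamma$ has been pushed into the single scalar $\xi = \mathrm{tr}\,\gamma T/P(b,T)$, and you have only two one-variable bounds on $\Psi$: $(C(\chi)/|\xi|)^N$ (useful for $|\xi|\gg C(\chi)$) and $(1+|\xi|)^M C(\chi)^{-M}$ (useful for $|\xi|\ll C(\chi)$). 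Now consider the regime $|\gamma|\asymp C(\chi)$, both large, with $T$ in the bulk of $\operatorname{supp} f$ where $|P(b,T)|\asymp 1$ and $|\mathrm{tr}\,\gamma T|\asymp|\gamma|$, so $|\xi|\asymp|\gamma|\asymp C(\chi)$. There the first bound is $\asymp 1$ and the second is $\asymp 1$; neither decays, and this region has volume $\asymp 1$ independent of $\gamma$. A cutoff optimization of the kind you describe cannot repair this: the ``bad'' region is not a thin slab of shrinking measure, it is the whole support of $f$. So your route cannot yield the super-polynomial decay $\max(|\gamma|,1)^{-N}C(\chi,\cdot)^{-N}$ that the proposition requires (the factor $|\gamma|^{12-64/\varepsilon}$ is a fixed polynomial loss, not a license to fail at arbitrarily large powers of $|\gamma|$).

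The paper avoids this by keeping the $t$- and $T$-integrations decoupled (Lemma~\ref{lem-arch-step}). There one Fourier-inverts $W(P(b,T)/t) = \int \widehat W(x)\psi(-xP(b,T)/t)\,dx$ \emph{before} integrating in $t$, introducing the auxiliary variable $x$, and then does non-stationary phase in four of the eight $T$-coordinates against the combined phase $\psi((\mathrm{tr}\,\gamma T - xP(b,T))/t)$. The stationary locus forces $x\nabla_T P(b,T)\approx\gamma$; since $\nabla_T P$ is bounded on $\operatorname{supp} f$, this pins $|x|\gtrsim|\gamma|$, and the Schwartz decay of $\widehat W$ in $x$ then delivers the super-polynomial $|\gamma|^{-N}$. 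Crucially, this bound and its $t$-derivative analogues hold uniformly, so the $C(\chi,\mathrm{Im}(s))^{-N}$ decay is obtained afterwards by a separate integration by parts in $t$ against $\chi(t)|t|^s$. Your single $\xi$-variable cannot reproduce this two-step decoupling; that is the missing idea.
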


The crucial step in the proof is the following lemma which we isolate for future use:
\begin{lem} \label{lem-arch-step} Let $N \in \ZZ_{\geq 0}, 1 >\varepsilon>0$, $\lambda>\varepsilon$.  If $\gamma \neq (0,0)$ then the integral
\begin{align*}
\int_{\gl_2(F)^{\oplus 2}}
W\left(\frac{P(b,T)}{t}\right)
f(T) \psi\left(\frac{\mathrm{tr}\,\gamma T}{t}\right)dT
\end{align*}
is bounded by a constant depending on $\varepsilon,N$ times
$$
\mathrm{max}( |\gamma|,1)^{-N}\mathrm{min}(|t|^{4-\varepsilon},|t|^{-N})|\gamma|^{6-32/\varepsilon }.
$$
\end{lem}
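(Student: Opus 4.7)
The plan is to reduce the integral to oscillatory integrals via Fourier inversion on $W$, analyze them by stationary phase and integration by parts, and interpolate with a trivial slab-volume bound.

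The $|t|^{-N}$ factor is immediate: $W$ is compactly supported in $F^{\times}$ and $P(b,T)$ is bounded on $\mathrm{supp}(f)$, so $W(P(b,T)/t)$ vanishes identically on $\mathrm{supp}(f)$ once $|t|$ is sufficiently large. For the principal case ($|t|$ bounded), I will use the Fourier inversion
$$
W(P(b,T)/t)=\int_F \widehat{W}(\xi)\,\psi\bigl(\xi P(b,T)/t\bigr)\,d\xi
$$
and swap the order of integration to write the integral under study as $\int_F \widehat{W}(\xi)\,J(\xi)\,d\xi$, where
$$
J(\xi):=\int_{\gl_2^{\oplus 2}(F)} f(T)\,\psi\bigl(\Phi_\xi(T)/t\bigr)\,dT,\qquad \Phi_\xi(T):=\mathrm{tr}\,\gamma T+\xi P(b,T).
$$

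The critical points of $\Phi_\xi$ satisfy $\gamma_1+\xi b_1\,\mathrm{adj}(T_1)=0$ and $\gamma_2-\xi b_2\,\mathrm{adj}(T_2)=0$; since $\mathrm{adj}$ is bounded above and below on $\mathrm{supp}(f)$, these equations admit solutions in $\mathrm{supp}(f)$ only for $|\xi|\asymp|\gamma|$. In this critical regime the Hessian of $\Phi_\xi$ equals $\xi H_P$, where $H_P$ is nondegenerate with $|\det H_P|=|b_1 b_2|^4$ (the Hessian of $\det$ on $2\times 2$ matrices is a symplectic form); stationary phase in $8[F:\RR]$ real dimensions then yields
$$
|J(\xi)|\ll |t|^4\,|\xi|^{-4}\asymp |t|^4\,|\gamma|^{-4}.
$$
In the noncritical regime, $|\nabla_T\Phi_\xi|\gtrsim \max(|\gamma|,|\xi|)$ uniformly on $\mathrm{supp}(f)$, and repeated integration by parts in $T$ gives $|J(\xi)|\ll_k |t|^k\max(|\gamma|,|\xi|)^{-k}$ for any $k$. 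Substituting these bounds back into $\int\widehat{W}(\xi)J(\xi)\,d\xi$ and using the Schwartz decay of $\widehat{W}$ produces an estimate of the form $|t|^4|\gamma|^{-m}$ for a suitable $m$.

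To obtain the precise form $|t|^{4-\varepsilon}|\gamma|^{6-32/\varepsilon}$, I will interpolate the stationary-phase bound with the crude slab-volume bound $|I|\ll |t|$, which follows from the fact that $W(P(b,T)/t)f(T)$ is supported where $|P(b,T)|\asymp|t|$, a set of volume $O(|t|)$ inside $\mathrm{supp}(f)$. The parameter $\varepsilon$ governs the interpolation weight, trading $|t|$-decay for $|\gamma|^{-1}$-growth. The additional factor $\max(|\gamma|,1)^{-N}$ for $|\gamma|\geq 1$ will come from further integration by parts in $T$ in the noncritical $\xi$-regime, leveraging the Schwartz decay of $\widehat{W}$ at infinity.

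The main obstacle will be executing the stationary-phase estimate uniformly in $\xi,\gamma,b$ with well-controlled constants, handled via a smooth cutoff at the transition $|\xi|\asymp|\gamma|$; the symplectic nondegeneracy of the $\det$-Hessian is essential here. Tracking the constants through the interpolation to arrive at the specific exponents $6-32/\varepsilon$ and $4-\varepsilon$ is mechanical but delicate.
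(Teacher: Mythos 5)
Your proposal shares the paper's first move --- Fourier inversion on $W$, turning the lemma into an estimate for the family of oscillatory integrals $J(\xi)=\int f(T)\psi\bigl((\mathrm{tr}\,\gamma T+\xi P(b,T))/t\bigr)dT$ --- and both proofs then exploit the near-quadratic nature of the phase. But after that, the two routes diverge. The paper never computes a Hessian determinant or invokes stationary phase at the archimedean place: instead it integrates by parts in a specific four-tuple of coordinates $(t_{1,11},t_{1,12},t_{2,11},t_{2,12})$, localizes $(x,T)$ to a slab of width $|t|^{1-\varepsilon/8}\max(|\gamma|,1)^{1/2}$ (or $\max(|\gamma|,1)^2$ in the small-$\gamma$ case), and then bounds the localized integral by its volume using the Schwartz decay of $\widehat W$ and $f$. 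The exponents $4-\varepsilon$ and $6-32/\varepsilon$ fall out of this elementary bookkeeping, not from interpolation. Your route --- stationary phase with the symplectic Hessian of $\det$, plus integration by parts in the nonstationary range, plus interpolation against a slab-volume bound $|I|\ll|t|$ --- is a genuine alternative and makes the archimedean argument structurally parallel to the nonarchimedean one (where the paper does use the Dabrowski--Fisher stationary phase). If pushed through, your method would likely give a \emph{sharper} $\gamma$-dependence ($|\gamma|^{-3}$ rather than $|\gamma|^{6-32/\varepsilon}$ for $|\gamma|<1$), in which case the interpolation step is actually superfluous; the slab-volume bound never enters the paper's argument.

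There is, however, a real gap. The lemma is stated for $f\in C_c^\infty(\gl_2^{\oplus 2}(F))$, so $\operatorname{supp}(f)$ may contain $(0,0)$ and need not avoid the singular locus. Your critical-point analysis asserts that $\mathrm{adj}$ is \emph{bounded below} on $\operatorname{supp}(f)$, which is what pins the stationary set to $|\xi|\asymp|\gamma|$ and underlies the noncritical lower bound $|\nabla_T\Phi_\xi|\gtrsim\max(|\gamma|,|\xi|)$. If $0\in\operatorname{supp}(f)$, this fails: for arbitrarily large $|\xi|$ the critical point $T^*_\iota$ with $\mathrm{adj}(T^*_\iota)=\mp\gamma_\iota/(\xi b_\iota)$ satisfies $|T^*|\asymp|\gamma|/|\xi|\to 0$ and thus lies inside $\operatorname{supp}(f)$, so the gradient is \emph{not} $\gtrsim\max(|\gamma|,|\xi|)$ there, and your integration-by-parts bound in that range is unjustified. (You can rescue the estimate by observing that the stationary-phase bound $|J(\xi)|\ll|t/\xi|^4$ is valid for all $\xi\neq 0$ irrespective of where $T^*$ sits, and then splitting according to $|\xi|\lessgtr c|\gamma|$ for a small $c$ depending only on $\operatorname{supp}(f)$ and $b$; but as written your regime dichotomy is incorrect.) A secondary concern is that the stationary-phase estimate must be made uniform as $T^*$ crosses the boundary of $\operatorname{supp}(f)$ and as the ratio $|\gamma_1|/|\gamma_2|$ becomes extreme (in which case there is no common $\xi$ making both gradient blocks small and the ``critical regime'' is actually empty); you will want van der Corput-type bounds rather than an asymptotic expansion to avoid these boundary pathologies.
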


\begin{proof}

Assume first that $|t| > \max(|\gamma|,1)^{1/4}$.
In this case the integrand in the lemma is supported in the set of $T$ such that $|T|^2 \gg |t| >\max(|\gamma|,1)^{1/4}$.  Since $f$ is Schwartz, this implies the desired bound.  

We henceforth assume that $|t| \leq \max(|\gamma|,1)^{1/4}$.
By Fourier inversion we obtain
\begin{align}
&\int_{\gl_2(F)^{\oplus 2}}
W\left(\frac{P(b,T)}{t}\right)
f(T) \psi\left(\frac{\mathrm{tr}\,\gamma T}{t}\right)dT \nonumber \\
&=\int_{F \times \gl_2(F)^{\oplus 2}}
\widehat{W}\left(x\right)
f(T) \psi\left(\frac{\mathrm{tr}\,\gamma T-xP(b,T)}{t}\right)dxdT.\label{to-bound}
\end{align}
Write $T_1:=(t_{1,ij})$ and $T_2:=(t_{2,ij})$ and similarly for $\gamma=(\gamma_1,\gamma_2)$.  Assume that $|\gamma_{1,11}|=\mathrm{max}( |\gamma|,1)$.  We apply integration by parts in $t_{1,11}, t_{1,12},t_{2,11},t_{2,12}$ to see that for any $\varepsilon>0$ and $N' \in \ZZ_{>0}$ the integral \eqref{to-bound} is equal to $O_{N',\varepsilon}(|t|^{N'\varepsilon/8}|\gamma_{1,11}|^{-N'/2})$ plus 
\begin{align} \label{to-bound2}
O\left(\int |\widehat{W}(x)f(T)| dxdT \right)
\end{align}
where the integral is over the set of $x,T$ such that
\begin{align} \label{ineq}
|\gamma_{1,11}-xb_1t_{1,22}|,|\gamma_{1,21}+xb_1t_{1,21}|,|\gamma_{2,11}+xb_2t_{2,22}|,|\gamma_{2,22}-xb_2t_{1,21}| \leq |t|^{1-\varepsilon/8}|\gamma_{1,11}|^{1/2}.
\end{align}
Since $f(T)$ is Schwartz, we can further truncate the integral to the subset satisfying 
\begin{align} \label{ineq-2}
|t_{1,22}| \ll |\gamma_{1,11}|^{1/2}|t|^{-\varepsilon/8}
\end{align}
at the expense of introducing an error of $O_{N}(|t|^{N'\varepsilon/8}|\gamma_{1,11}|^{-N'/2})$.

Notice that $|t|^{1-\varepsilon/8}|\gamma_{1,11}|^{1/2} \ll |\gamma_{1,11}|^{3/4-\varepsilon/32}$.
Thus if $|\gamma_{1,11}|$ is sufficiently large, then the first inequality of \eqref{ineq} implies that 
 $|\gamma_{1,11}| \ll |xt_{1,22}|$ which, by \eqref{ineq-2} implies that 
 $$
 |\gamma_{1,11}|^{1/2}|t|^{\varepsilon/8} \ll |x|.
 $$
If, on the other hand, $\gamma_{1,11}$ lies in a compact subset of $F^\times$
 and $|t|$ is sufficiently small then the first inequality implies $1 \ll |xt_{1,22}|$ where the implied constant depends on the compact set.  This implies in view of \eqref{ineq-2} that $|t|^{\varepsilon/8} \ll |x|$. 
 Thus bounding the integrals in \eqref{to-bound2} trivially and using the fact that $\widehat{W}$ and $f$ are rapidly decreasing  we see that for any $N' \in \ZZ_{ \geq 0}$ the integral \eqref{to-bound} is bounded by a constant times
$$
|t|^{4-\varepsilon}|\gamma_{1,11}|^{-N'}.
$$
An analogous argument handles the cases where $\max(|\gamma|,1)$ is the norm of another matrix entry of $\gamma_1$ or $\gamma_2$.  

Assume now that $\mathrm{max}( |\gamma|,1)=1$.  
Assume moveover that $|\gamma_{1,11}|=\mathrm{max}(|\gamma_1|,|\gamma_2|)$; this is bigger than zero since $\gamma \neq (0,0)$.

Applying integration by parts as above we see that for any $\varepsilon>0$ and $N' \in \ZZ_{>0}$ the integral \eqref{to-bound} is equal to $O_{N',\varepsilon}(|t|^{N'\varepsilon/8}|\gamma_{1,11}|^{-2N'})$
plus the integral \eqref{to-bound2} taken over the set of $x,T$ such that 
\begin{align} \label{domain-1}
|\gamma_{1,11}-xb_1t_{1,22}|,|\gamma_{1,21}-xb_1t_{1,21}|,|\gamma_{2,11}+xb_2t_{2,22}|,|\gamma_{2,21}-xb_2t_{1,21}| \leq |t|^{1-\varepsilon/8}|\gamma_{1,11}|^{2}.
\end{align}

Since $f(T)$ is Schwartz, we can further truncate the integral to the subset satisfying 
\begin{align} \label{ineq-4}
|t_{1,22}| \ll |\gamma_{1,11}|^{2}|t|^{-\varepsilon/8}
\end{align}
at the expense of introducing an error of $O_{N}(|t|^{N'\varepsilon/8}|\gamma_{1,11}|^{-2N'})$

The first inequality in \eqref{domain-1} implies that for $|\gamma_{1,11}|$ sufficiently small we must have 
$|xt_{1,22}| \gg |\gamma_{1,11}|$, which implies in view of \eqref{ineq-4} that 
$$
|x| \gg |t|^{\varepsilon/8}|\gamma_{1,11}|^{-1}.
$$
If $|\gamma_{1,11}|$ is bounded away from zero and $|t|$ is sufficiently small we have $|xt_{1,22}| \gg 1$, where the implied constant depends on the bound on $|\gamma_{1,11}|$.  In this case we conclude via \eqref{ineq-4} that
$$
|x| \gg |t|^{\varepsilon/8}|\gamma_{1,11}|^{-2}.
$$
In either case we conclude that the integral in \eqref{to-bound2} taken over the domain defined by \eqref{domain-1} and \eqref{ineq-4} is bounded by a constant times
$$
|t|^{4-\varepsilon}|\gamma_{1,11}|^{12}.
$$
Taking $N'=\lceil \frac{32}{\varepsilon}-8\rceil$ and noting that $0 >-2\lceil \frac{32}{\varepsilon}-8 \rceil \geq 12-\frac{64}{\varepsilon}$ we arrive at a bound of $O(|t|^{4-\varepsilon}|\gamma_{1,11}|^{12-64/\varepsilon})$ for \eqref{to-bound} in this case.  An analogous argument establishes the same bound if $|\gamma|$ is less than $1$ and equal to the norm of some other matrix entry of $\gamma_1$ or $\gamma_2$.

\end{proof}

We now prove Proposition \ref{prop-arch}:

\begin{proof}[Proof of Proposition \ref{prop-arch}]
  The assertions when $h_0(x,y)$ is $W(x)$ are clear, for $W(t)f(tT)$ is Schwartz and compactly supported as a function of $(t,T) \in F^\times \times \gl_2(F)^{\oplus 2}$.  Assume that $h_0(x,y)=W(y/x)$.

Assume first that $\gamma=(0,0)$.  We then take a change of variables $t \mapsto P(b,T)t$ to arrive at 
\begin{align*}
&\int_{F^\times}\left(\int_{\gl_2(F)^{\oplus 2}}
W\left(\frac{P(b,T)}{t}\right)
f(T) dT\right)\chi(t)|t|^sdt^\times\\
&=\int_{F^\times}
W\left(\frac{1}{t}\right)\chi(t)|t|^sdt^\times
\int_{\gl_2^{\oplus 2}(F)}f(T) |P(b,T)|^s\chi(P(b,T))dT.
\end{align*}
The integral $\int_{F^\times}
W\left(\frac{1}{t}\right)\chi(t)|t|^sdt^\times$ is clearly $O_{N,\varepsilon,\lambda}(C(\chi,\mathrm{Im}(s))^{-N})$.  As for the latter factor one has
\begin{align} \label{latter}
\left|\int_{\gl_2^{\oplus 2}(F)}f(T)|P(b,T)|^s\chi(P(b,T))dT \right|\leq \int_{\gl_2^{\oplus 2}(F)}|f(T)||P(b,T)|^{\mathrm{Re}(s)}dT.
\end{align}
We need only show this converges for $\mathrm{Re}(s)>-1$.  
Write $P(b,\cdot)_*f:F \to \CC$ for the push-forward of $f(T)$ along the smooth surjection 
\begin{align*} 
P(b,\cdot):\gl_2^{\oplus 2}(F) \lto F\\
T \mapsto P(b,T).
\end{align*}
This map is in fact submersive away from $(0,0)$, so $P(b,\cdot)_*f$ is again a Schwartz function on $F$.  
Thus the integral \eqref{latter} is
\begin{align}
\int_{F} P(b,\cdot)_*f(t)|t|^{\mathrm{Re}(s)}dt
\end{align}
which converges absolutely for $\mathrm{Re}(s)>-1$, as desired.

Let $D=t\frac{\partial}{\partial t}$ (and if $v$ is complex, $\bar{D}=\bar{t} \frac{\partial}{\partial \bar{t}}$), viewed as differential operators on $F^\times$.  We claim that for any $i \geq 0$ (and $j \geq 0$ if $v$ is complex), $\varepsilon>0$ and $N \geq 0$ one has
\begin{align} \label{basic-bound}
& D^i\bar{D}^j\left(\int_{\gl_2(F)^{\oplus 2}}
W\left(\frac{P(b,T)}{t}\right)
f(T) \psi\left(\frac{\mathrm{tr}\,\gamma T}{t}\right)dT\right)\\
& \ll_{i,j,\varepsilon,N} \mathrm{max}( |\gamma|,1)^{-N}\mathrm{min}(|t|^{4-\varepsilon},|t|^{-N})|\gamma|^{12-64/\varepsilon }. \nonumber
\end{align}
Here and in the remainder of the proof all implied constants are allowed to depend on $f,W$ and the bounds on $|b_1|,|b_2|$.
Assuming the claim, repeated application of integration by parts in $t$ (and $\bar{t}$ if $v$ is complex) implies the proposition.  On the other hand, since $W$ and $f$ are arbitrary it is not hard to see that the claim follows from the special case where $i=j=0$.  In other words, we are to show that
\begin{align} \label{int}
\int_{\gl_2(F)^{\oplus 2}}
W\left(\frac{P(b,T)}{t}\right)
f(T) \psi\left(\frac{\mathrm{tr}\,\gamma T}{t}\right)dT
\end{align}
is bounded by a constant depending on $\varepsilon,N$ times
$$
\mathrm{max}( |\gamma|,1)^{-N}\mathrm{min}(|t|^{4-\varepsilon},|t|^{-N})|\gamma|^{6-32/\varepsilon }.
$$
This is the content of Lemma \ref{lem-arch-step}.

\end{proof}

\subsection{Nonarchimedian integrals}

In this section we assume that $v$ is a nonarchimedian place of $F$ and omit it from notation, writing $F:=F_v$, etc.  We denote by $\varpi$ a uniformizer for $F$, let $q=|\varpi|^{-1}$, and let $\mathcal{D}_F$ be the absolute different of $F$.

We prove the following proposition:
\begin{prop} \label{prop-na-bound}
Let $b_1,b_2 \in F^\times$, $f \in C_c^\infty(\gl_2(F)^{\oplus 2})$.
The integral 
$$
\int_{F^\times} \one_{\OO_F}(t) \int_{\gl_2(F)^{\oplus 2}}
\one_{t\OO_F}(P(b,T))
f(T) \psi\left(\frac{\mathrm{tr}\,\gamma T}{t}\right)dT\chi(t)|t|^sdt^\times
$$
converges absolutely for $\mathrm{Re}(s)>0$.  It vanishes if $|\gamma|$, or the absolute norm of the conductor of $\chi$ is sufficiently large in a sense depending on $f$ and $|b_1|,|b_2|$.

For $t \in \OO_F-0$ consider the integral
\begin{align} \label{int-cons}
\int_{\gl_2(F)^{\oplus 2}}
\one_{t\OO_F}(P(b,T))
f(T) \psi\left(\frac{\mathrm{tr}\,\gamma T}{t}\right)dT.
\end{align}
If $(\gamma_1,\gamma_2) \neq (0,0)$ and $v(t)>1$ then \eqref{int-cons}
 is bounded in absolute value by a constant depending on $f$, $|b_1|,|b_2|$ times 
\begin{align*}
q^{4\min(v(\gamma_{1,ij}),v(\gamma_{2,i'j'}))}|t|^4.
\end{align*}

\end{prop}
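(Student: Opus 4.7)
My plan is to handle the three assertions of the proposition separately.

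For absolute convergence, I would bound the inner $T$-integral trivially by $\|f\|_1$ (using $|\psi|\leq 1$ and $\one_{t\OO_F}\leq 1$), reducing the outer integral to a constant times $\int_{\OO_F-0}|t|^{\mathrm{Re}(s)}dt^\times = \sum_{n\geq 0}q^{-n\mathrm{Re}(s)}$, which converges for $\mathrm{Re}(s)>0$.

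For the two vanishing claims, I would use soft arguments. For vanishing at large $|\gamma|$, I would apply a Fourier-dual identity $\one_{t\OO_F}(y)=c_F\int_{\mathcal{D}_F^{-1}}\psi(xy/t)\,dx$ (with $c_F$ a Haar-measure normalization constant) to rewrite the inner integral as $c_F\int_{\mathcal{D}_F^{-1}}\int f(T)\psi((xP(b,T)+\mathrm{tr}\,\gamma T)/t)\,dT\,dx$, an integral over the compact set $\mathcal{D}_F^{-1}$. For fixed $x$ there, local constancy of $f$ together with $p$-adic integration by parts forces the inner $T$-integral to vanish once the effective linear coefficient $(\gamma+\text{bounded terms})/t$ lies outside the dual lattice determined by the level of $f$; since $t\in\OO_F$ and $x,b_1,b_2$ are bounded, this vanishing is uniform once $|\gamma|$ is large enough. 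For vanishing at large conductor of $\chi$, I would substitute $t\mapsto u^{-1}t$ in the outer integral and $T\mapsto uS$ in the inner for $u\in 1+\varpi^M\OO_F$ with $M$ large. Using $f(uS)=f(S)$, $|u|=1$, $P(b,uS)=u^2P(b,S)$, and $\mathrm{tr}\,\gamma(uS)=u\,\mathrm{tr}\,\gamma S$, one verifies that the inner integral is unchanged, so the entire integral equals $\chi(u^{-1})$ times itself, forcing vanishing when $\chi$ is nontrivial on $1+\varpi^M\OO_F$.

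For the main quantitative bound, I would decompose $f$ as a finite linear combination of indicators of cosets $T_0+\varpi^M\gl_2(\OO_F)^{\oplus 2}$, with $M$ large enough that $f$ is invariant under translation by $\varpi^M\gl_2(\OO_F)^{\oplus 2}$ and finitely many $T_0$. After substituting $T=T_0+\varpi^M U$ with $U\in\gl_2(\OO_F)^{\oplus 2}$ and Fourier-expanding $\one_{t\OO_F}$, each contribution reduces to an $x$-integral of a product $\prod_{i=1,2}\int_{\gl_2(\OO_F)}\psi((a_i\det U_i+\mathrm{tr}\,\gamma_i' U_i)/t)\,dU_i$, where $a_i=\pm x\varpi^{2M}b_i$ and $\gamma_i'=\varpi^M(\pm xb_i\mathrm{adj}(T_{0,i})+\gamma_i)$. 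Since $v(t)>1$, I would apply Lemma \ref{lem-DF} in the range $v(a_i)=0$, obtaining $|t|^2$ per factor (aggregate $|t|^4$), and handle other $v(a_i)$-ranges by rescaling or by trivializing the quadratic phase (leading to a character integral in $U_i$ subject to a support constraint on $\gamma_i'$). The constraint $\gamma_i'\in\gl_2(\OO_F)$ required for non-vanishing shrinks the effective $x$-range by a factor $|\gamma|^{-4}=q^{4\min v(\gamma_{ij})}$ once $|\gamma|$ exceeds $q^M$, producing the claimed bound.

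The hardest part will be the bookkeeping in this last step: summing contributions across the valuation regimes of $a_i$, verifying that the support constraints on $\gamma_i'$ yield exactly the factor $|\gamma|^{-4}$ (rather than something weaker), and checking that implied constants depend only on $f$, $|b_1|$, and $|b_2|$ and not on the individual cosets in the decomposition of $f$. The other parts are comparatively soft.
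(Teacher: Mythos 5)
Your first two soft assertions are handled in essentially the same way the paper does (Fourier‐expanding $\one_{t\OO_F}$ against $\mathcal{D}_F^{-1}$, reducing $f$ to a single coset indicator, and exploiting invariance of the inner integral under scaling of $t$ by units near $1$ to kill highly ramified $\chi$). One caveat: the substitution pair you propose for the conductor vanishing, $t \mapsto u^{-1}t$ together with $T\mapsto uS$, does \emph{not} leave the inner integral unchanged --- the $\psi$-argument picks up a spurious $u^2$. You need to pair $t\mapsto ut$ with $T\mapsto uT$ \emph{and} a compensating $x \mapsto u^{-1}x$ in the Fourier-dual variable (as the paper effectively does), or equivalently work with the $(x,T)$-integral and change both variables. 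This is fixable, but as stated the identity fails.

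The serious gap is in the quantitative bound. You plan to reduce to factors of the shape $\int_{\gl_2(\OO_F)}\psi\bigl((a_i\det U_i + \mathrm{tr}\,\gamma_i' U_i)/t\bigr)\,dU_i$ with $a_i = \pm x\varpi^{2M}b_i$ and apply Lemma \ref{lem-DF} ``in the range $v(a_i)=0$.'' But Lemma \ref{lem-DF} requires the coefficient of $\det T$ to be a \emph{unit}, and here $v(a_i) = v(x) + 2M + v(b_i)$ with $x \in \mathcal{D}_F^{-1}$; unless the different is deep enough to offset $2M$, the set $\{v(a_i)=0\}$ is generically empty. In other words the regime you identify as the main one contributes nothing, and the ``other $v(a_i)$-ranges'' you defer to rescaling or phase-trivialization are in fact the whole computation. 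Rescaling $U_i\mapsto\varpi^{-n}U_i$ changes the domain away from $\gl_2(\OO_F)$, so Lemma \ref{lem-DF} still does not apply directly, and you would end up re-deriving a general non-unit-Hessian stationary-phase estimate by hand. The paper sidesteps all of this by invoking the general theorem of Dabrowski--Fisher \cite[Theorem 1.8]{DF} directly: it constructs the joint phase $\mathcal{F}_x(T_1,T_2)$ on the full eight-dimensional domain, computes that $|D_x(\ZZ_p)|\le 1$ with $v(x) \leq \min v(\gamma_{\iota,ij}) + O(1)$ at the critical point, and reads off $|t|^4\,|H_x|^{-1/2}$ with the Hessian determinant having valuation $8v(x) + O(1)$. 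This is also where the factor $q^{4\min v(\gamma_{\iota,ij})}$ actually comes from: it is a bound on $|H_x|^{-1/2}$ via the constraint on $v(x)$, \emph{not} a shrinkage of the $x$-integration range as your proposal suggests (indeed for $\gamma$ with positive $\min v$ the factor is $\ge 1$, which a measure-shrinkage heuristic could never produce). You should either import \cite[Theorem 1.8]{DF} as the paper does, or be prepared to prove its content in the non-unit-Hessian case yourself before the claimed factor can be justified.
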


\begin{proof} 
We use a Fourier transform to rewrite the integral as 
$$
\int_{F^\times} \one_{\OO_F}(t)\left( \int_{\mathcal{D}_F^{-1} \times 
\gl_2(F)^{\oplus 2}}
f(T) \psi\left(\frac{\mathrm{tr}\,\gamma T-xP(b,T)}{t}\right)dT dx\right)\chi(t)|t|^sdt^\times.
$$
We assume without loss of generality that 
$$
f=\one_{\beta\varpi^{-m}+\varpi^{k}\gl_2(\OO_F)^{\oplus 2}}
$$
for some $\beta=(\beta_1,\beta_2) \in \gl_2(\OO_F)^{\oplus 2}$
and $m,k \geq 0$; thus the above becomes 
\begin{align}  \label{before-q}
&\int_{F^\times}\one_{\OO_F}(t)\Bigg( \int_{\mathcal{D}_F^{-1} \times \gl_2(F)^{\oplus 2}}
\one_{\varpi^k\gl_2(\OO_F)^{\oplus 2}}(T-\beta \varpi^{-m}) \psi\left(\frac{\mathrm{tr}\,\gamma T-xP(b,T)}{t}\Bigg)dTdx\right)\chi(t)|t|^sdt^\times. 
\end{align}
It is clear that the multiple integral over $F^\times \times \mathcal{D}_F^{-1} \times \gl_2(F)^{\oplus 2}$ converges absolutely for  $\mathrm{Re}(s)>0$.  We therefore assume that $\mathrm{Re}(s)>0$ until otherwise stated to justify our manipulations.
The integral \eqref{before-q} is equal to $q^{8m}$ times
\begin{align*}
\int_{F^\times}\one_{\OO_F}(t)&\Bigg( \int_{\mathcal{D}_F^{-1} \times \gl_2(F)^{\oplus 2}}
\one_{\varpi^{k+m}\gl_2(\OO_F)^{\oplus 2}}(T-\beta)  \psi\Bigg(\frac{\mathrm{tr}\,\gamma T}{\varpi^{m}t}-\frac{xP(b,T)}{\varpi^{2m}t}\Bigg) dTdx\Bigg)\chi(t)|t|^sdt^\times.
\end{align*} 
If we multiply the integral over $\mathcal{D}_F^{-1} \times \gl_2(F)^{\oplus 2}$ by $\bar{\chi}(t)$ the resulting function of $t$ is invariant under $t \mapsto ut$ for $u \in \OO_F^\times(\varpi^{k+m})$.  Therefore the integral vanishes if the absolute norm of the conductor of $\chi$ is sufficiently large in a sense depending only on $k,m$.  

Letting $\ell\geq 0$ be the smallest integer such that $b_1\varpi^\ell$ and $b_2 \varpi^\ell$ are both integral we see that the above is 
\begin{align*}
\int_{F^\times}\one_{\OO_F}(t)&\Bigg( \int_{\mathcal{D}_F^{-1} \times \gl_2(F)^{\oplus 2}}
\one_{\varpi^{k+m}\gl_2(\OO_F)^{\oplus 2}}(T-\beta)  \psi\Bigg(\frac{\mathrm{tr}\,\gamma T}{\varpi^{m}t}-\frac{ x P(\varpi^\ell b,T)}{\varpi^{2m+\ell}t}\Bigg)dTdx\Bigg)\chi(t)|t|^sdt^\times. 
\end{align*}
This is zero unless $\gamma_1,\gamma_2 \in \varpi^{-(m+\ell+k)}\mathcal{D}_F^{-1}\gl_2(\OO_F)$.

With the vanishing statements claimed in the proposition proven, we are left with establishing a bound for the integral \eqref{int-cons}.
We now assume that $(\gamma_1,\gamma_2) \neq (0,0)$ and $v(t)>1$.
Taking a change of variables $T \mapsto \varpi^{k+m}T+\beta$ we see that \eqref{int-cons}
is equal to $q^{-8(k+m)}$ times
\begin{align} \label{int-t}
& \int_{\mathcal{D}_F^{-1} \times \gl_2(F)^{\oplus 2}}
\one_{\gl_2(\OO_F)^{\oplus 2}}(T_1,T_2)  \psi\Bigg(\frac{\varpi^{m+\ell}\mathrm{tr} \, \gamma(\varpi^{k+m}T+\beta)-xP(\varpi^\ell b,\varpi^{k+m}T+\beta)}{\varpi^{2m+\ell}t}\Bigg)dTdx. \nonumber
\end{align} 
We apply the the $p$-adic stationary phase method of Dabrowski and Fisher \cite{DF} to estimate this integral.  
Choose a generator $\delta \in \OO_F$ for the ideal $\mathcal{D}_F$  (this $\delta$ has no relation to the $\delta^S$-function from \S \ref{sec-delta}).
For $\iota=1, 2$, let
\begin{align*}
\mathcal{F}_{\iota,x}(T_\iota):=&\delta\Big(x(-1)^{\iota-1}b_\iota\varpi^\ell\det (\varpi^{k+m}T_\iota+\beta_\iota)+\varpi^{\ell+m}\mathrm{tr}(\gamma_\iota(\varpi^{k+m}T_\iota+\beta_\iota))\Big)
\end{align*}
and let $\mathcal{F}_x(T_1,T_2)=\mathcal{F}_{1,x}(T_1)+\mathcal{F}_{2,x}(T_2)$.
Let $p$ be the rational prime below $v$ and let 
$$
D_x:=\mathrm{Res}_{\OO_F/\ZZ_p}\mathrm{Spec}(\OO_F[T_1,T_2]/(\nabla \mathcal{F}_x)) \subseteq
\A_{\ZZ_p}^{8[F:\QQ_p]} \quad \textrm{ (affine }8[F:\QQ_p]\textrm{ space).}
$$
Here $(\nabla \mathcal{F}_x) \leq \OO_F[T_1,T_2]$ is the ideal generated by the entries of 
$\nabla \mathcal{F}_x$.  
Writing $T_\iota=(t_{\iota,ij})$, $\gamma_\iota=(\gamma_{\iota,ij})$, and $\beta_\iota=(\beta_{\iota,ij})$ one has
\begin{align*}
\nabla \mathcal{F}_{\iota,x}(T_\iota)=\delta\varpi^{k+\ell+m}\begin{pmatrix} \varpi^{m}\gamma_{\iota,11}-(-1)^{\iota-1}xb_\iota(\varpi^{k+m}t_{\iota,22}+\beta_{\iota,22}) \\
\varpi^{m}\gamma_{\iota,12}+(-1)^{\iota-1}xb_\iota(\varpi^{k+m}t_{\iota,12}+
\beta_{\iota,12})\\
\varpi^{m}\gamma_{\iota,21}+(-1)^{\iota-1}xb_\iota(\varpi^{k+m}t_{\iota,21}+
\beta_{\iota,21})\\
\varpi^{m}\gamma_{\iota,22}-(-1)^{\iota-1}xb_\iota(\varpi^{k+m}t_{\iota,11}+\beta_{\iota,11})\end{pmatrix}
\end{align*}
and thus the Hessian is
\begin{align*}
H_x(T_\iota)=(-1)^{\iota-1}\delta xb_\iota\varpi^{2k+\ell+2m}\begin{pmatrix} & & & -1 \\ & & 1 &\\ & 1 & &\\ -1 & & &  \end{pmatrix}
\end{align*}
which has valuation $v(x^4b_\iota^4 \delta^4)+8k+4\ell+8m$.  Thus the valuation of the Hessian of $\nabla \mathcal{F}_x(T_1,T_2)$ is $v(x^8b_1^4b_2^4 \delta^8)+16k+8\ell+16m$.  
Since $(\gamma_1,\gamma_2) \neq (0,0)$, at any point where $\nabla \mathcal{F}_x$ vanishes we must have $x \neq 0$.  It follows that $|D_x(\ZZ_p)|\leq 1$ and
$v(x) \leq \min(v(\gamma_{1,ij}),v(\gamma_{2,i'j'}))+\kappa'$ for some real number $\kappa'$ depending only on $k,\ell,m$.  Applying \cite[Theorem 1.8]{DF}
we see that \eqref{int-t}
is bounded by 
\begin{align}
|t|^8|t|^{-8/2}\left(\max_{x \in\mathcal{D}^{-1}_F,\alpha \in D_x(\ZZ_p)}|H_x(\alpha)|^{-1}\right)^{1/2} \leq |t|^4 q^\kappa q^{4\min(v(\gamma_{1,ij}),v(\gamma_{2,i'j'}))}
\end{align}
for some real number $\kappa$ depending only on $k,\ell,m$.
This completes the proof of the proposition.

\end{proof}

The following is a corollary:

\begin{cor} \label{cor-h}
Let  $h_0(x,y)=\one_{\OO_F^\times}(x)$ or $\one_{\OO_F^{\times}}(x/y)$ and $f \in C_c^\infty(\GL_2(F)^{\times 2})$.  
Assume that $|b_1| \asymp |b_2| \asymp 1$.
For $\mathrm{Re}(s)>0$ the integral 
$$
\int_{F^\times \times \gl_2(F)^{\oplus 2}}\one_{\OO_F}(t)
h_0(t,P(b,T))
f(T) \psi\left(\frac{\mathrm{tr}\,\gamma T}{t}\right)\chi(t)|t|^sdTdt^\times
$$
converges absolutely.  It vanishes if $|\gamma|$ or the conductor of $\chi$ is sufficiently large in a sense depending only on $f$ and the bounds on $b_1,b_2$.  Finally, 
$$
\int_{F^\times}\one_{\OO_F}(t)\Bigg|\int_{\gl_2(F)^{\oplus 2}}
h(t,P(b,T))
f(T) \psi\left(\frac{\mathrm{tr}\,\gamma T}{t}\right)dT\Bigg||t|^sdt^\times
$$
 is bounded in the half-plane $\mathrm{Re}(s) >-4$ by $q^{4\mathrm{min}(v(\gamma_{1,ij}),v(\gamma_{2,i'j'}))}$ times a constant depending only on $f$.

\end{cor}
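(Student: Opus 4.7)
The plan is to reduce Corollary \ref{cor-h} to Proposition \ref{prop-na-bound} via the identity
$h(t,P(b,T)) = \one_{\OO_F^\times}(t) - \one_{\OO_F^\times}(P(b,T)/t)$ together with the observation that $\one_{\OO_F^\times}(x/y) = \one_{\OO_F^\times}(y/x)$ since $\OO_F^\times$ is closed under inversion. In particular $\one_{\OO_F^\times}(P(b,T)/t) \leq \one_{t\OO_F}(P(b,T))$ pointwise, so integrals involving the second piece of $h$ are directly controlled by Proposition \ref{prop-na-bound}, while the first piece restricts $t$ to the compact subgroup $\OO_F^\times$ where everything becomes elementary.

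For the absolute convergence and vanishing statements, I treat each choice of $h_0$ in turn. If $h_0(x,y) = \one_{\OO_F^\times}(x)$, then $|t|^s = 1$ on the range of integration and the inner integral equals (up to measure normalization) the Fourier transform $\widehat{f}(\gamma/t)$ of the locally constant compactly supported function $f$. Absolute convergence is then immediate; vanishing for large $|\gamma|$ follows from the compact support of $\widehat{f}$; and vanishing for large conductor of $\chi$ follows from the invariance of $t \mapsto \widehat{f}(\gamma/t)$ under some open subgroup $1 + \varpi^n \OO_F$ depending on $f$, together with orthogonality of characters on $\OO_F^\times$. If $h_0(x,y) = \one_{\OO_F^\times}(x/y)$, all three statements follow from the pointwise majorization above and the corresponding statements in Proposition \ref{prop-na-bound}.

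For the final bound, split $h$ as above and treat each summand separately. The first summand contributes $O_f(1)$ since $\OO_F^\times$ has finite measure and the inner integral is bounded by $\|f\|_{L^1}$. For the second summand I partition the $t$-integral into $v(t) = 0$, $v(t) = 1$, and $v(t) \geq 2$: the first two ranges give $O_f(1)$ by trivial estimates on the inner integral (using that $|t|^s$ is bounded on these compact pieces for any fixed $s$ with $\mathrm{Re}(s) > -4$), while in the third range Proposition \ref{prop-na-bound} yields an inner integral bounded by $C_f\, q^{4\min v(\gamma_{i,jk})}|t|^4$, so that integration against $|t|^s\, dt^\times$ converges for $\mathrm{Re}(s) > -4$ and produces the claimed factor. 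The $O_f(1)$ contributions from the other ranges are then absorbed into $q^{4\min v(\gamma_{i,jk})}$ by using that when $|\gamma|$ is large enough the entire integral already vanishes by the second assertion of the corollary, so in the non-vanishing range $\min v(\gamma_{i,jk})$ is bounded below by a constant depending only on $f$. The main obstacle will be careful bookkeeping to ensure that the implied constant is uniform in $\chi$ and $s$ on compact subsets of the half-plane $\mathrm{Re}(s) > -4$ and that the crude bounds in the $v(t) \in \{0,1\}$ ranges do not introduce spurious dependence on $\gamma$ beyond the allowed factor; the compactness of the relevant ranges of $t$ and the vanishing of the inner integral in $|\gamma|$ make this routine but deserving of attention.
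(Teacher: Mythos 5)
Your overall strategy — reduce to Proposition \ref{prop-na-bound}, handle the two shapes of $h_0$ separately, and use trivial bounds on the compact ranges $v(t)\in\{0,1\}$ together with the proposition's bound for $v(t)\geq 2$ — matches the paper's proof in spirit. But the device you use to make contact with Proposition \ref{prop-na-bound}, the pointwise majorization $\one_{\OO_F^\times}(P(b,T)/t)\leq\one_{t\OO_F}(P(b,T))$, does not deliver what you claim at two points, and this is a genuine gap.

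First, the vanishing assertions (for $|\gamma|$ large, and for $\chi$ of large conductor) are exact-equality statements; a pointwise inequality between two nonnegative functions tells you nothing about whether the oscillatory integral of the smaller one vanishes when the integral of the larger one does. Second, and more seriously, the final bound in Proposition \ref{prop-na-bound} reads $C_f\,q^{4\min v(\gamma_{\iota,ij})}|t|^4$ and is obtained by the $p$-adic stationary phase method applied to the oscillatory integral $\int \one_{t\OO_F}(P(b,T))f(T)\psi(\mathrm{tr}\,\gamma T/t)\,dT$. Once you replace the integrand by its absolute value — which is what the majorization forces you to do — you can only bound $\int \one_{t\OO_F}(P(b,T))|f(T)|\,dT$, and this is $O(|t|)$, not $O(|t|^4)$. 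The exponent $4$ is precisely what you need for convergence in $\mathrm{Re}(s)>-4$, and it comes from cancellation, not from measure estimates; so the $v(t)\geq 2$ range is not ``directly controlled'' by the proposition via the majorization.

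The correct reduction, and the one the paper implicitly uses, is the exact identity
\[
\one_{\OO_F^\times}\!\left(\frac{P(b,T)}{t}\right)=\one_{t\OO_F}\!\left(P(b,T)\right)-\one_{(\varpi t)\OO_F}\!\left(P(b,T)\right),
\]
which exhibits the inner integral for $h_0(x,y)=\one_{\OO_F^\times}(x/y)$ as a difference of two oscillatory integrals each of the exact form treated in Proposition \ref{prop-na-bound}, the second with $t$ replaced by $\varpi t$. Both vanishing statements and the $|t|^4$ bound then follow for each summand separately, and hence for the difference. With that substitution the rest of your argument (the elementary treatment of $\one_{\OO_F^\times}(t)$ via the compactly supported Fourier transform $\widehat f$, the partition of the $t$-integral, and the absorption of the $O_f(1)$ contribution from small $v(t)$ into $q^{4\min v(\gamma_{\iota,ij})}$ via the vanishing for $|\gamma|$ large) goes through and recovers the corollary. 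The paper achieves the same reduction by a change of variable $t\mapsto\varpi^\ell t$; your additive decomposition is a cleaner route to the same place, but the majorization-by-itself shortcut has to go.
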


\begin{proof}
The assertions when $h_0(x,y)$ is $\one_{\OO_F^\times}(x)$ are clear, for $\one_{\OO_F^\times}(t)f(T)$ is smooth and compactly supported as a function of $(t,T) \in F^\times \times \gl_2(F)^{\oplus 2}$.

If $h_0(x,y)=\one_{\OO_F^{\times}}(y/x)$ then
 the fact that $f \in C_c^\infty(\GL_2(F)^{\times 2})$ 
implies that 
\begin{align} \label{single-t}
\int_{\gl_2(F)^{\oplus 2}}
\one_{\OO_F^\times}\left(\frac{P(b,T)}{t}\right)
f(T) \psi\left(\frac{\mathrm{tr}\,\gamma T}{t}\right)dT
\end{align}
vanishes if  $|t|$ is sufficiently large in a sense depending on $f$ and the bounds on $|b_1|,|b_2|$.  Thus there is an $\ell \in \ZZ$ depending on $f$ and the bounds on $b_1,b_2$ such that 
\begin{align*}
&\int_{F^\times} \Bigg(\int_{\gl_2(F)^{\oplus 2}}
\one_{\OO_F^\times}\left(\frac{P(b,T)}{t}\right)
f(T) \psi\left(\frac{\mathrm{tr}\,\gamma T}{t}\right)dT\Bigg)\chi(t)|t|^sdt^\times\\
&=\int_{F^\times} \Bigg(\int_{\gl_2(F)^{\oplus 2}} \one_{\OO_F}(t)
\one_{t\OO_F}\left(\varpi^{-\ell}P(b,T)\right)
f(T) \psi\left(\frac{\mathrm{tr}(\varpi^{-\ell} \gamma T)}{t}\right)dT\Bigg)\chi(\varpi^{\ell}t)|\varpi^{\ell}t|^sdt^\times.
\end{align*}
We can now apply Proposition \ref{prop-na-bound} and trivial bounds when $0 \leq v(t) \leq 1$ to both summands to deduce the corollary.

\end{proof}

\section{Poisson summation in $d \in F^\times$} \label{sec-ps-in-d}

In this section we prove the following theorem:
\begin{thm} \label{thm-main-comp}
The limit $
\lim_{X \to \infty} \Sigma^0(X)$ exists and is equal to 
\begin{align*}
&\frac{|\det g_\ell g_r^{-1}|^2}{d_F^4}
\sum_{0 \neq \gamma \in \gl_2(F)^{\oplus 2}}\sum_{\substack{b \in (F^\times)^{\oplus 2}\\b_2 \det \gamma_1=b_1 \det \gamma_2}}
\sum_c |c|_S^2\mathcal{I}(b \det g_\ell g_r^{-1},cg_r^{-1}\gamma g_\ell )\one_{\gl_2(\widehat{\OO}_{F}^S)}(g^{-1}_r \gamma g_\ell).
\end{align*}
where the sum on $c$ is over a set of representatives for the principal nonzero ideals of $\OO_F^S$.  The sum over $b,\gamma$ is absolutely convergent.

\end{thm}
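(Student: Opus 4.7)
The plan is to pick up from \eqref{after-first-ps}, change variables to absorb $g_\ell,g_r$, evaluate the $T_v$-integrals at $v\notin S$ using the tools of \S\ref{sec-comp}, apply multiplicative Poisson summation in $d\in F^\times$ to replace the $d$-sum by the integral defining $\mathcal{I}$, and then invoke the bounds of \S\ref{sec-bounds} to pass to the limit $X\to\infty$.

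First I would substitute $T\mapsto g_\ell T g_r^{-1}$ on $\gl_2(\A_F)^{\oplus 2}$. This produces the Jacobian $|\det g_\ell g_r^{-1}|^2$ appearing in the stated formula, replaces $\one_{\gl_2(\widehat{\OO}_F^S)^{\oplus 2}}(g_\ell^{-1}Tg_r)$ by $\one_{\gl_2(\widehat{\OO}_F^S)^{\oplus 2}}(T)$, and substitutes $b\mapsto \widetilde b:=b\det g_\ell g_r^{-1}$ and $\gamma\mapsto\widetilde\gamma:=g_r^{-1}\gamma g_\ell$ throughout the integrand---exactly the arguments appearing in $\mathcal{I}(b\det g_\ell g_r^{-1},cg_r^{-1}\gamma g_\ell)$ and in the indicator $\one_{\gl_2(\widehat{\OO}_F^S)}(g_r^{-1}\gamma g_\ell)$ of the target formula.

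Next, the integrand factors over the places of $F$, and the $T_v$-integral at $v\notin S$ is exactly of the type studied in \S\ref{sec-comp}. Fourier-inverting $\one_{d_v\OO_{F_v}}(P(\widetilde b,T_v))$ and applying Lemmas \ref{lem-DF} and \ref{lem-Gaussian} (as in the proof of Proposition \ref{prop-na-comp}) writes the local integral as a finite sum over local divisors $c_v\mid d_v$, supported on $c_v^{-1}\widetilde\gamma\in\gl_2(\OO_{F_v})^{\oplus 2}$ and subject to a congruence coming from $P(\widetilde b^{-1},c_v^{-1}\widetilde\gamma)$. Swapping $\sum_d\sum_{c\mid d}=\sum_c\sum_{d\,:\,c\mid d}$ and writing $d=cd'$ with $d'\in\OO_F^S-0$, the outer $c$-sum becomes a sum over representatives for the nonzero principal ideals of $\OO_F^S$ (possible since $\OO_F^S$ has class number one), while the local congruences globalize to the support condition $\one_{\gl_2(\widehat{\OO}_F^S)}(g_r^{-1}\gamma g_\ell)$ together with the Diophantine relation $b_2\det\gamma_1=b_1\det\gamma_2$, which is the global unpacking of $P(\widetilde b^{-1},c^{-1}\widetilde\gamma)=0$.

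The remaining sum over $d'\in\OO_F^S-0$ is then the target of the Poisson summation in $d\in F^\times$ announced in the section heading: combined with Proposition \ref{prop-delta} it converts the discrete sum into $\int_{F_S}(\cdots)\,dt/|t|_S^4$, precisely the integral in the definition \eqref{I-def} of $\mathcal{I}$; the factor $|c|_S^2$ arises from tracking the $c$-rescaling through this step, and the constants $c_{\sqrt X}/\sqrt X$ and $d_F^4$ from \eqref{after-first-ps}, combined with the $\sqrt{d}_F$ asymptotic from Proposition \ref{prop-delta} and the discriminant contribution from the multiplicative Poisson step, should yield the stated coefficient $|\det g_\ell g_r^{-1}|^2/d_F^4$. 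Finally, to interchange $\lim_{X\to\infty}$ with the sum over $(\gamma,b,c)$ I would invoke \S\ref{sec-bounds}: the finite support of the $b$-sum (from $V$ and $\one_{\mathcal{F}}$), the rapid $|\gamma|_\infty^{-N}$ decay from Proposition \ref{prop-arch}, the decay at $v\in S-\infty$ from Corollary \ref{cor-h}, and the support condition $c^{-1}\widetilde\gamma\in\gl_2(\widehat{\OO}_F^S)$ which forces $c$ essentially to be a divisor of $\widetilde\gamma$. The main obstacle I anticipate is the bookkeeping in the Poisson-summation step---combining the factors of $d_F$, $|c|_S$, the Jacobian $|\det g_\ell g_r^{-1}|^2$, and the measures to arrive at the claimed precise constant and the precise measure $dt/|t|_S^4$---but once this accounting is in hand, absolute convergence follows from the bounds above and the theorem drops out.
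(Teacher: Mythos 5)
Your high-level ingredients (the change of variables $T\mapsto g_\ell Tg_r^{-1}$, the nonarchimedean local evaluation via Lemmas \ref{lem-DF}, \ref{lem-Gaussian} and Proposition \ref{prop-na-comp}, multiplicative Poisson summation in $d$, the finiteness of the $b$-sum, and the bounds from \S\ref{sec-bounds}) are all in play, but your account of the Poisson-summation step has a genuine gap, and this is where the theorem is actually proved.

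When you say that the $d'$-sum ``combined with Proposition \ref{prop-delta} converts the discrete sum into $\int_{F_S}(\cdots)\,dt/|t|_S^4$,'' you are attributing the conversion to the wrong mechanism. Proposition \ref{prop-delta} only supplies $c_{\sqrt X}=\sqrt{d}_F+O_N(X^{-N})$, which cleans up a constant; it does not convert a sum to an integral. What actually happens in the paper is the Mellin-theoretic form of Poisson summation: the $d$-sum becomes a sum over Hecke characters $\chi$ together with a vertical contour integral in $s$, carrying a factor $X^{(3+s)/2}$. The unramified local analysis then produces the Dirichlet series $D_{\gamma,b,\chi}(s)$ (which equals $L(s+4,\chi^S)$ exactly when $b_2\det\gamma_1=b_1\det\gamma_2$, and is entire otherwise) divided by $L(s+5,\chi^S)$. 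One then shifts the contour from $\sigma>1$ to $\mathrm{Re}(s)=-7/2$. The main term is the residue at $s=-3$ of $\zeta_F^S(s+4)$, and evaluating the rest of the integrand at $s=-3$ is precisely what yields the $dt/|t|_S^4$ measure and the $\zeta_F^S(2)^{-1}$ and $|c|_S^2$ constants in $\mathcal{I}$; the shifted-contour remainder is $O(X^{-1/4})$ via Proposition \ref{prop-arch}, Proposition \ref{prop-na-bound}, Corollary \ref{cor-h}, and preconvex bounds on $L(s+4,\chi^S)$.

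This contour/residue step is not optional bookkeeping: it is simultaneously responsible for (a) the emergence of the Diophantine condition $b_2\det\gamma_1=b_1\det\gamma_2$ (those $(\gamma,b)$ with $P(b^{-1},\gamma)\neq 0$ give an entire $D_{\gamma,b,\chi}$, hence no residue, hence their contribution dies under $X^{(3+s)/2}$ on the shifted line), (b) the existence of the limit $X\to\infty$, and (c) the precise shape of $\mathcal{I}$. Your proposal instead presents the Diophantine condition as a ``global unpacking of local congruences'' and the passage to $\mathcal{I}$ as an application of Proposition \ref{prop-delta}; both are misattributions, and without the residue extraction you have no argument that the non-main terms actually vanish in the limit. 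As written, the proposal is not a complete proof.
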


Here $\mathcal{I}(b,\gamma)$ is defined as in \eqref{I-def}.  Theorem \ref{thm-main-comp} together with Proposition \ref{prop-spec} and Lemma \ref{lem-vanish} yield Theorem \ref{main-thm}, our main theorem.

\begin{proof}

In \eqref{after-first-ps} we found that $\Sigma^0(X)$ was equal to 
\begin{align*}
d_F^{-4}\sum_{0 \neq \gamma \in \gl_2(F)^{\oplus 2}}&\sum_{b \in (F^\times)^{\oplus 2}}\sum_{d \in \OO_F^S-0}\int_{\gl_2(\A_F)^{\oplus 2} }\frac{V(b\det T/X)}{|b_1\det T_1|_SX}\frac{c_{\sqrt{X}}}{\sqrt{X}}\one_{d \widehat{\OO}_F^S}(P(b,T)) \\& \times 
 h\left(\frac{d}{\Delta(\sqrt{X})},\frac{P(b,T)}{\Delta(X)} \right)\one_{\mathcal{F}}(b\det g_\ell g_r^{-1}) f\one_{\gl_2(\widehat{\OO}_F^S)^{\oplus 2}}(g_\ell^{-1} Tg_r) \psi\left(\frac{\mathrm{tr}\,\gamma T}{d}\right)dT.
\end{align*}
Applying propositions \ref{prop-na-comp}, \ref{prop-arch} and \ref{prop-na-bound}, Corollary  \ref{cor-h} we see that it is permissible to apply Poisson summation in $d \in F^\times$ to this expression, which implies that it is equal to 
\begin{align*}
\frac{ 1}{d_F^{9/2}2 \pi i\mathrm{Res}_{s=1}\zeta^\infty_F(s)}
\sum_{\gamma,b}&\sum_{\chi \in (A F^\times \backslash \A_F^\times)^{\wedge}}\int_{\mathrm{Re}(s)=\sigma}\int_{\A_F^\times}
\one_{\widehat{\OO}_F^{S}}(t)\int_{\gl_2(\A_F)^{\oplus 2}}\frac{V(b\det T/X)}{|b_1\det T_1|_SX} \nonumber \\& \times \frac{c_{\sqrt{X}}}{\sqrt{X}}\one_{t \widehat{\OO}_F^S}(P(b,T)) h\left(\frac{t}{\Delta(\sqrt{X})},\frac{P(b,T)}{\Delta(X)} \right)\one_{\mathcal{F}}(b\det g_\ell g_r^{-1}) \nonumber \\& \times  f\one_{\gl_2(\widehat{\OO}_F^S)^{\oplus 2}}(g^{-1}_\ell Tg_r) \psi\left(\frac{\mathrm{tr}\,\gamma T}{t}\right)dT\chi(t)|t|^{s}dt^\times ds.
\end{align*}
Here we take $\sigma$ sufficiently large (trivial bounds imply that $\sigma>1$ is sufficient).  A possible reference for this application of Poisson summation is \cite[\S 2]{BB}, bearing in mind that our measure differs from theirs by a factor of $\zeta_{F \infty}(1)d_F^{-1/2}$.
Taking a change of variables $(t,T) \mapsto \Delta(\sqrt{X})(t,g_\ell T g_r^{-1})$ we arrive at
\begin{align*} 
\frac{ c_{\sqrt{X}}|\det g_{\ell}g_r^{-1}|^2}{d^{9/2}2 \pi i\mathrm{Res}_{s=1}\zeta^\infty_F(s)}
\sum_{\gamma,b,\chi}&\int_{\mathrm{Re}(s)=\sigma}\int_{\A_F^\times}
\one_{\widehat{\OO}_F^{S}}(t)\int_{\gl_2(\A_F)^{\oplus 2}}\frac{V(b\det g_\ell T g_r^{-1})}{|b_1\det g_{\ell 1}T_1g_{r1}^{-1}|_S} \nonumber \\& \times \one_{t \widehat{\OO}_F^S}(P(b,g_\ell Tg_r^{-1})) h\left(t,P(b \det g_\ell g_r^{-1},T) \right)\one_{\mathcal{F}}(b\det g_\ell g_r^{-1}) \nonumber \\& \times  f\one_{\gl_2(\widehat{\OO}_F^S)^{\oplus 2}}( T) \psi\left(\frac{\mathrm{tr}\,g_r^{-1}\gamma g_\ell T}{t}\right)dTX^{(3+s)/2}\chi(t)|t|^{s}dt^\times ds.
\end{align*}
Applying Proposition \ref{prop-na-comp} we can write this as 
\begin{align}\label{before-move}
&\sum_{c}|c|_S^{-s-1}\frac{ c_{\sqrt{X}}|\det g_{\ell}g_r^{-1}|^2}{d_F^{9/2}2 \pi i\mathrm{Res}_{s=1}\zeta^\infty_F(s)}
\sum_{\gamma,b,\chi}\int_{\mathrm{Re}(s)=\sigma}\int_{F_S^\times}\int_{\gl_2(F_S)^{\oplus 2}}\frac{V(b\det g_\ell T g_r^{-1})}{|b_1\det g_{\ell 1}T_1g_{r1}^{-1}|_S}  \\& \times  h\left(t,P(b \det g_\ell g_r^{-1},T) \right)\one_{\mathcal{F}}(b\det g_\ell g_r^{-1})   f( T) \psi\left(\frac{\mathrm{tr}\,g_r^{-1} c\gamma g_\ell T}{t}\right)dTX^{(3+s)/2}\chi_S(t)|t|_S^{s}dt_S^\times \nonumber \\& \times L(s+5,\chi^S)^{-1}
\one_{\gl_2^{\oplus 2}(\OO_F^S)}(g_r^{-1} \gamma g_\ell)\int_{\widehat{\OO}^S_F}
\one_{t\OO_F}(P(b^{-1},\gamma))\chi^S(t)(|t|^{S})^{s+4}(dt^{S})^{\times}. \nonumber
\end{align}
We note that by definition of $\one_{\mathcal{F}}$ and $V$ in \S \ref{ssec-formula} 
and \S \ref{sec-delta-app}, respectively, the sums over $b_1$ and $b_2$ in this expression can be taken to run over a finite set independent of $\chi$, $\gamma$, $T$ and $t$. 

Consider the Dirichlet series 
\begin{align} \label{ds}
D_{\gamma,b,\chi}(s):=\int_{\widehat{\OO}^S_F}
\one_{t\OO_F}(P(b^{-1},\gamma))\chi^S(t)(|t|^{S})^{ s+4}(dt^{S})^{\times}.
\end{align}
  If  $b_1\det \gamma_2  \neq b_2 \det \gamma_1 $, then  \eqref{ds} converges absolutely in the entire complex plane, and it is bounded by
$O_{\varepsilon}(\max(|P(b^{-1},\gamma)|_S^{-\mathrm{Re}(s)-4+\varepsilon},|P(b^{-1},\gamma)|_S^{\varepsilon}))$ for any $\varepsilon>0$.
  If $b_1\det \gamma_2=b_2\det \gamma_1$ and $\gamma \neq (0,0)$ then 
$D_{\gamma,b,\chi}(s)=L(s+4,\chi^S)$.  

Moving all the contours in \eqref{before-move} to the line $\mathrm{Re}(s)=-\tfrac{7}{2}=-\tfrac{1}{2}-3$, we see therefore see that it is equal to the sum of the contribution of the residues at $s=-3$:
\begin{align} \label{after-move-res}
&\frac{|\det g_\ell g_r^{-1}|^2}{d^{9/2}_F\mathrm{Res}_{s=1}\zeta_F^\infty(s)\zeta^S_F(2)}
\sum_{\gamma}\sum_{\substack{b \in (F^\times)^{\oplus 2}\\ b_2\det \gamma_1=b_1 \det \gamma_2}}
\mathrm{Res}_{s=-3}\zeta_F^S(s+4) c_{\sqrt{X}}\mathcal{I}(b \det g_\ell g_r^{-1},g_r^{-1} \gamma g_\ell)  \one_{\gl_2^{\oplus 2}}(g_r^{-1}\gamma g_\ell)
\end{align}
plus
\begin{align} \label{after-move-nres}
&\frac{ |\det g_\ell g_r^{-1}|^2}{d^{9/2}2 \pi i\mathrm{Res}_{s=1}\zeta^\infty_F(s)}\sum_{\gamma,b,\chi}\int_{\mathrm{Re}(s)=-\tfrac{7}{2}}\frac{\one_{\gl_2^{\oplus 2}}(g_r^{-1}\gamma g_\ell)D_{\gamma,b,\chi}(s)}{L(s+5,\chi^S)}\\& \times \nonumber \Bigg(\int_{\gl_2(F_{S_0})^{\oplus 2}}\frac{V(b\det T)}{|b_1\det g_{\ell 1}T_1g_{r1}^{-1}|_S} c_{\sqrt{X}}h\left(t,P(b,T) \right)\one_{\mathcal{F}}(b\det g_\ell g_r^{-1}) \nonumber \\& \times  f(T) \psi\left(\frac{\mathrm{tr}\,g_r^{-1}\gamma g_\ell T}{t}\right)dT\Bigg)
\chi(t)X^{(3+s)/2}|t|^sdt^\times ds. \nonumber
\end{align}
By Proposition \ref{prop-delta} $c_{\sqrt{X}}=\sqrt{d}_F+O_N(\sqrt{X}^{-N})$ for any $N>0$, so to complete the proof it suffices to show that \eqref{after-move-nres} is $O(X^{-1/4})$.  This is an easy consequence of Proposition \ref{prop-arch} and Proposition \ref{prop-na-bound} and the fact that $L(s+4,\chi^{S})$ is  bounded by $C(\chi,\mathrm{Im}(s+4))^{\beta}$ on the line $\mathrm{Re}(s)=-\tfrac{7}{2}$ by standard preconvex bounds \cite[(10)]{Brumley}.

Regarding the statement in the theorem on the absolute convergence of the sum over $\gamma,b$, the same argument as that given below equation \eqref{before-move} implies that 
the sum over $b$ is actually finite.  The absolute convergence of the $\gamma$ sum therefore follows from Proposition \ref{prop-arch} and Corollary \ref{cor-h}.

\end{proof}


\end{document}